\newtheorem{thm}{Theorem}
\newtheorem{cor}[thm]{Corollary}
\newtheorem{lem}[thm]{Lemma}
\newtheorem{rmk}[thm]{Remark}
\title[Stationary states of an aggregation equation]{Stationary states of an aggregation equation with degenerate diffusion and bounded attractive potential}
\author{Gunnar Kaib}
\address{Applied Mathematics M\"unster\\University of M\"unster\\Einsteinstr.\ 62, D-48149 M\"unster, Germany}
\email{gunnar.kaib@uni-muenster.de}
\begin{document}
\bibliographystyle{plain}

\begin{abstract}
We investigate stationary solutions of a non-local aggregation equation with degenerate power-law diffusion and bounded attractive potential in arbitrary dimensions. Compact stationary solutions are characterized and compactness considerations are used to derive the existence of global minimizers of the corresponding energy depending on the prefactor of the degenerate diffusion for all exponents of the degenerate diffusion greater than one. We show that a global minimizer is compactly supported and, in case of quadratic diffusion, we prove that it is the unique stationary solution up to a translation. The existence of stationary solutions being only local minimizers is discussed.
\end{abstract}

\maketitle

\section{Introduction}
In this paper, we investigate stationary solutions of the non-local aggregation equation 
\begin{equation}
\label{eq1}
\partial_t\rho
=\nabla\cdot(\rho\nabla(\varepsilon\rho^{m-1}-G\ast\rho))
\end{equation}
in $\mathbb{R}^N$ where $G$ is a bounded purely attractive and integrable interaction potential and $\rho\in L^1(\mathbb{R}^N)\cap L^m(\mathbb{R}^N)$ a non-negative function satisfying $\|\rho\|_{L^1}=1$. Since we can rescale time in \eqref{eq1}, we assume the potential $G$ to be normalized, i.e. $\|G\|_{L^1}=1$, and state all results depending on the coefficient $\varepsilon>0$.

Due to the theory of gradient flows in \cite{ambrosiogiglisavare}, we can consider the aggregation equation \eqref{eq1} for probability density functions $\rho\in L^m(\mathbb{R}^N)$ formally as a gradient flow in the Wasserstein metric of the energy functional
\begin{equation}
\label{eq2}
E[\rho]=\int_{\mathbb{R}^N}\frac{\varepsilon}{m}\rho^m(x)\,dx-\frac{1}{2}\int_{\mathbb{R}^N}\int_{\mathbb{R}^N}G(x-y)\rho(y)\rho(x)\,dydx.
\end{equation}

Equation \eqref{eq1} is discussed in \cite{topazbertozzilewis} for $m=3$ to model biological aggregation. The non-linear diffusion models an anti-crowding motion acting locally repulsive while the attractive potential models a non-local aggregative behaviour. Performing some weakly non-linear and asymptotic analysis, so-called clump solutions with sharp edges are obtained in \cite{topazbertozzilewis} as stationary solutions of \eqref{eq1} which is one reasonable behaviour for biological swarming. 

In \cite{burgerdifrancescofranek}, it is shown for $N=1$ and $m=2$ that a unique stationary solution of \eqref{eq1} exists if the coefficient $\varepsilon$ satisfies $0<\varepsilon<1$ and that a stationary solution of \eqref{eq1} is compactly supported. In \cite{burgerfetecauhuang}, it can be found for $N=1$ a detailed discussion about stationary states of \eqref{eq1} for any $m>1$ partly based on numerical experiments. These observations are the starting point for this work.

Aggregation equations with a Newtonian or Bessel potential, which do not fall into the class of potentials considered in this paper, are widely studied due to the Keller-Segel model for chemotaxis \cite{kellersegel}. This model reads in a simplified form as
\begin{align}
\begin{split}
\label{kellersegel}
\partial_t\rho & =\Delta\rho-\nabla\cdot(\rho\nabla\phi),\\
-\Delta\phi+\alpha\phi & =\rho
\end{split}
\end{align}
with $\alpha\geq0$. For an extensive summary about results of the Keller-Segel system until 2003 see e.g.\ the review \cite{horstmann} and the references therein. Here, we only focus on thresholds which allow us to characterize properties of solutions of system \eqref{kellersegel} in $\mathbb{R}^N$.

In \cite{dolbeaultperthame,blanchetdolbeaultperthame}, it is shown for $N=2$ and $\alpha=0$, depending on the conserved total mass $M=\|\rho\|_{L^1}$, that for $M>8\pi$ solutions of system \eqref{kellersegel} blow up in finite time and that for $M<8\pi$ a global in time weak solution of \eqref{kellersegel} exists which converges with exponential rate \cite{camposdolbeault} to the self-similar solution of \eqref{kellersegel}. The critical case $M=8\pi$ is considered in \cite{blanchetcarrillomasmoudi} where the existence of a global in time solution with a finite second moment blowing up in infinite time is proved whereas the existence of a stationary solution with an infinite second moment is shown in the critical case in \cite{blanchetcarlencarrillo}.

Considering the Keller-Segel system \eqref{kellersegel} with a homogeneous non-linear diffusion
\begin{align}
\begin{split}
\label{kellersegeldegenerate}
\partial_t\rho & =\Delta\rho^m-\nabla\cdot(\rho\nabla\phi),\\
-\Delta\phi+\alpha\phi & =\rho,
\end{split}
\end{align}
we basically end up with equation \eqref{eq1} with a Newtonian or Bessel potential since we can rewrite \eqref{kellersegeldegenerate} as
\begin{equation}
\label{kellersegeldegeneraterewritten}
\partial_t\rho=\nabla\cdot(\nabla\rho^m-\rho\nabla(G\ast\rho))
\end{equation}
with a Newtonian or Bessel potential $G$. In \cite{bedrossianrodriguezbertozzi}, existence and uniqueness of weak solutions of \eqref{kellersegeldegeneraterewritten} are proved in $\mathbb{R}^N$ for $N\geq3$ for a wide class of attractive interaction potentials $G$ including the Newtonian and Bessel potentials. This result covers the well-posedness of weak solutions obtained in \cite{bertozzislepcev} for smooth attractive potentials and extends the existence theory for the Bessel potential in \cite{sugiyama2}. Existence and uniqueness of solutions to \eqref{kellersegeldegeneraterewritten} with respect to entropy solution are considered in \cite{burgercapassomorale} and for $N=1$ the uniqueness of solutions to \eqref{kellersegeldegeneraterewritten} is shown in \cite{burgerdifrancesco} using the pseudo-inverse of the Wasserstein distance.

Moreover, in \cite{bedrossianrodriguezbertozzi} it is determined for $N\geq3$ a critical exponent of the degenerate diffusion and a critical mass such that similar results as for the Keller-Segel system \eqref{kellersegel} for $N=2$ are obtained. In addition, for subcritical exponents it is shown that a global in time solution exists whereas in the supercritical case a finite time blow-up can occur for a certain class of problems. In particular, for the Newtonian and Bessel potential these results are also partly obtained in \cite{blanchetcarrillolaurencot} and \cite{sugiyama1,sugiyama2} respectively where the critical exponent is $m=2-\frac{2}{N}$. Some of the results in \cite{bedrossianrodriguezbertozzi} are extended in \cite{bedrossianrodriguez} to $\mathbb{R}^2$.

Regarding stationary solutions, equation \eqref{kellersegeldegeneraterewritten} is considered with a Newtonian or regularized Newtonian potential in \cite{kimyao}. Among others, it is shown via some mass comparison that there is a unique radially symmetric stationary solution for $m>2-\frac{2}{N}$ in dimensions $N\geq3$ which is monotonically decreasing and compactly supported. See also \cite{liebyau} for radial uniqueness results for stationary solutions of \eqref{kellersegeldegeneraterewritten} with a Newtonian potential. In addition, it is derived in \cite{kimyao} that continuous, radially symmetric and compactly supported solutions of equation \eqref{kellersegeldegeneraterewritten} with a Newtonian or regularized Newtonian potential are asymptotically converging in the subcritical case to the unique radial stationary solution as $t\rightarrow\infty$. The asymptotic behaviour towards stationary solutions in the critical case is considered in \cite{yao}.

The symmetry of stationary solutions of equation \eqref{kellersegeldegeneraterewritten} with a Newtonian potential is investigated in \cite{stroehmer} in three dimensions. It is shown via some variant of the moving plane method that compact stationary solutions are radially symmetric.

Combining the results in \cite{kimyao} and \cite{stroehmer}, one can conclude that compact stationary solutions of equation \eqref{kellersegeldegeneraterewritten} with a Newtonian interaction are unique in the subcritical case in three dimensions. The ideas of \cite{kimyao} and \cite{stroehmer} were recently used in \cite{carrillocastorinavolzone} to show that there is a unique compact stationary solution of equation \eqref{kellersegeldegeneraterewritten} with a Newtonian potential for $N=2$. This unique compact stationary solution coincides with the global minimizer of the corresponding energy. For results in the supercritical case $0<m<2-\frac{2}{N}$ and $N\geq3$ see e.g.\ \cite{bianliu}.

Recently, using continuous Steiner symmetrization techniques it was shown in \cite{carrillohittmeirvolzoneyao} for $N\geq1$ that all stationary solutions of equation \eqref{kellersegeldegeneraterewritten} are radially symmetric and monotonically decreasing if the interaction potential $G$ is radially symmetric, purely attractive and satisfies some growth assumptions. In particular, in case of a Newtonian interaction it is proved that there is a unique stationary solution. Moreover, for $N=2$ it is shown that a weak solution is converging towards this unique stationary solution as $t\rightarrow\infty$.

Furthermore, in \cite{chayeskimyao} the non-local aggregation equation \eqref{kellersegeldegeneraterewritten} with a symmetric attractive $C^2$-potential is considered on a torus. For $m=2$, it is shown in the subcritical case $\varepsilon>\max_{k\neq0}\big\{|\hat{G}(k)|\,\big|\,\hat{G}(k)>0\big\}$, where $\hat{G}$ denotes the Fourier transform of $G$, that the global minimizer is the constant solution and the energy of a weak solution converges exponentially fast to the energy of the constant solution with a rate depending on the initial data. In the supercritical case, it is found that a strictly positive solution, in particular the constant solution, is not a local minimizer of the energy. Similar results regarding such a phase transition were obtained before in \cite{chayespanferov} for an aggregation equation with linear diffusion.

Recently, many results were achieved considering non-local aggregation equations without diffusion in the space of probability measures. Instead of the diffusion and a purely attractive potential, a potential which acts locally repulsive and non-locally attractive is considered. Particularly, power-law potentials are widely studied. Depending on the potential, a rich variety of pattern formation is observed, see e.g.\ \cite{bertozzikolokolnikovsunuminskyvonbrecht}. In \cite{balaguecarrillolaurentraoul1}, the dimensionality of local minimizers of the corresponding energy is determined out of the strength of the local repulsive behaviour and in \cite{balaguecarrillolaurentraoul2}, the stability of radially symmetric solutions is investigated. Aggregation equations of this type do allow for stationary solutions not being radially symmetric, in contrast to aggregation equations with a degenerate diffusion and a suitable attractive potential as we consider here. This is due to the symmetry result in \cite{carrillohittmeirvolzoneyao}.

Regarding power-law potentials, the regularity of local minimizers is studied in \cite{carrillodelgadinomellet} and, if the repulsive part behaves like the Newtonian potential, the existence of unique radially symmetric and compactly supported stationary solutions is derived in \cite{fetecauhuangkolokolnikov,fetecauhuang}. Recently, for certain parameters of the power-law potential, explicit radially symmetric stationary solutions are obtained in \cite{carrillohuang}. 

Especially for power-law potentials, the existence of a global minimizer of the corresponding energy is shown in \cite{choksifetecautopaloglu} via the concentration-compactness principle of Lions \cite{lions} and in \cite{carrillochipothuang} via the discrete setting and some compactness observation. In \cite{canizocarrillopatacchini,simioneslepcevtopaloglu}, the existence of global minimizers is obtained for more general potentials. Additionally, it is shown in \cite{canizocarrillopatacchini} that global minimizers are compactly supported. The strategy in \cite{canizocarrillopatacchini} is to consider the problem first in a given ball instead of the whole space because this setting allows to derive the existence of a global minimizer. If there exists a probability measure such that its energy is less than the limit of the interaction potential at infinity, then it is shown that the diameter of the support of the global minimizer for the problem restricted to the ball is bounded from above independently of the size of the ball. This result is used to conclude that a compact global minimizer of the energy exists in the whole space. The approach in \cite{canizocarrillopatacchini} is similar to the one in \cite{auchmutybeals}.

In this paper, assuming that the interaction potential $G$ is bounded and radially strictly monotonically decreasing, we derive a sharp condition for stationary solutions of \eqref{eq1} being compactly supported which we then use to show that global minimizers of the energy functional \eqref{eq2}, if they exist, have compact support.

In \cite{bedrossian}, it is proved that a global minimizer of the energy functional \eqref{eq2} exists for $m=2$ and $0<\varepsilon<1$ as well as for $m>2$ and all $\varepsilon>0$. In \cite{burgerdifrancescofranek}, it is shown for $m=2$ that the threshold for the coefficient $\varepsilon$ is sharp. For a bounded, radially symmetric and purely attractive potential, we also characterize for $1<m<2$ a threshold $\varepsilon_0$ such that a global minimizer exists for all coefficients $0<\varepsilon\leq\varepsilon_0$. We prove this statement by adapting the compactness considerations of \cite{canizocarrillopatacchini} to our problem instead of following the approach in \cite{bedrossian} via the concentration-compactness principle of Lions \cite{lions} which only applies to $m\geq2$.

Moreover, for $m=2$ and $0<\varepsilon<1$ we show in arbitrary dimensions that the global minimizer of the energy functional \eqref{eq2} is unique up to a translation and that it is the unique stationary solution of \eqref{eq1} up to a translation. This stationary solution is radially symmetric and monotonically decreasing. We use several ideas from the proof for $N=1$ in \cite{burgerdifrancescofranek} as well as the symmetry result in \cite{carrillohittmeirvolzoneyao} and compactness considerations from \cite{canizocarrillopatacchini} to prove the uniqueness in higher dimensions.

For $m>2$, we prove that a compact stationary solution of \eqref{eq1} exists for all $\varepsilon>0$ if the interaction potential $G$ is bounded and strictly radially monotonically decreasing. For $1<m<2$, we show the existence of a constant $\varepsilon_0>0$ depending on the interaction potential and the exponent $m$ of the degenerate diffusion such that a compact stationary solution of \eqref{eq1} exists for all $0<\varepsilon\leq\varepsilon_0$. The results for $m\neq2$ are conjectured in \cite{burgerfetecauhuang}. Furthermore, our results complement some findings in \cite{burgerfetecauhuang} where it is shown for $N=1$ that for all $L>0$ there is some coefficient $\varepsilon>0$ such that a radially symmetric and monotonically decreasing stationary solution of \eqref{eq1} with support $[-L,L]$ exists. We extend this statement to arbitrary dimensions.

In addition, under the assumptions that for $m\neq2$ there is a unique radially symmetric and monotonically decreasing stationary solution of \eqref{eq1} with the coefficient $\varepsilon$ strictly increasing with the size of the support (which we prove for $m=2$ and which is indicated by numerical results in \cite{burgerfetecauhuang} for $m\neq2$), we show the following. For $1<m<2$, this unique stationary solution coincides with the global minimizer of the energy \eqref{eq2} for coefficients $\varepsilon>0$ not greater than $\varepsilon_0$ but loses this property for coefficients with a larger value. In particular, under these assumptions we show that $\varepsilon_0<\varepsilon_1$ where $\varepsilon_1$ denotes the threshold for the existence of a compact stationary solution of \eqref{eq1}. Numerically, it is also observed in \cite{burgerfetecauhuang} that stationary solutions of \eqref{eq1} being only local minimizers may exist.

Due to the additional non-linearity arising for $m\neq2$, it seems to be rather difficult to prove the uniqueness result which holds for $m=2$. However, numerical observations in \cite{burgerfetecauhuang} indicate that uniqueness should also be valid for $m\neq2$. So, this question stays an open problem for now. Besides, another interesting open problem not considered in the present paper concerns the long-time asymptotic of solutions to the aggregation equation \eqref{eq1} with a bounded attractive potential.

The paper is organized as follows: we state some assumptions of the interaction potential and the main result of the paper in Section \ref{notationandmainresult}. In Section \ref{preliminaries}, we show that under these assumptions stationary solutions of \eqref{eq1} are continuous in $\mathbb{R}^N$ as well as radially symmetric and monotonically decreasing. In Section \ref{compactness}, we derive a condition to characterize compact stationary solutions of \eqref{eq1} and prove that global minimizers of the energy functional \eqref{eq2} are compactly supported. The existence of a global minimizer of the energy given in \eqref{eq2} is shown in Section \ref{globalminimizers}. In Section \ref{stationarysolutions} we prove in arbitrary dimensions for $m=2$ uniqueness of stationary solutions of \eqref{eq1} up to a translation and consequently we show the uniqueness of the global minimizer of the corresponding energy. Finally, we discuss the existence of stationary solutions of \eqref{eq1} with positive energy in Section \ref{discussion}.

\section{Notation and main result}
\label{notationandmainresult}
In the entire paper, we suppose the interaction potential $G$ to satisfy that
\begin{enumerate}[label=(G\arabic*)]
\item\label{(G1)} $G$ is non-negative and $\mathrm{supp}\,G=\mathbb{R}^N$,
\item\label{(G2)} $G\in\,W^{1,1}(\mathbb{R}^N)\cap L^{\infty}(\mathbb{R}^N)\cap C^2(\mathbb{R}^N)$ with $\|G\|_{L^1}=1$,
\item\label{(G3)} $G$ is radially symmetric and strictly monotonically decreasing, i.e.\ $G(x)=g(|x|)$ and $g^\prime(r)<0$ for all $r>0$,
\item\label{(G4)} $g^{\prime\prime}(0)<0$ and $\lim_{r\rightarrow +\infty}g(r)=0$.
\end{enumerate}
Especially, $G$ being bounded plays an important role in our further considerations.

We define
\begin{equation}
\mathcal{P}^{M}(\mathbb{R}^N)\coloneqq\Big\{f\in L_+^1(\mathbb{R}^N)\,\Big|\,\int_{\mathbb{R}^N}f(x)\,dx=M\Big\}
\end{equation}
and for $M=1$ we use the abbreviated notation $\mathcal{P}(\mathbb{R}^N)$. Moreover, we write
\begin{equation}
\mathcal{P}_R(\mathbb{R}^N)\coloneqq\Big\{f\in L_+^1(\mathbb{R}^N)\,\Big|\,\int_{\mathbb{R}^N}f(x)\,dx=1,\,\mathrm{supp}\,f\subset\overline{B_R(0)}\Big\}.
\end{equation}
We consider stationary solutions of
\begin{equation}
\label{eq1b}
\partial_t\rho
=\nabla\cdot(\rho\nabla(\varepsilon\rho^{m-1}-G\ast\rho))
\end{equation}
in $\mathbb{R}^N$ and minimizers of the corresponding energy
\begin{equation}
\label{eq2b}
E[\rho]=\int_{\mathbb{R}^N}\frac{\varepsilon}{m}\rho^m(x)\,dx-\frac{1}{2}\int_{\mathbb{R}^N}\int_{\mathbb{R}^N}G(x-y)\rho(y)\rho(x)\,dydx
\end{equation}
in $L^m(\mathbb{R}^N)\cap\mathcal{P}(\mathbb{R}^N)$.

As a main result of this paper, we will prove that for $m=2$ a stationary solution of \eqref{eq1b} is unique up to a translation:
\begin{thm}
\label{uniquenesstheoremformequals2}
Let $G$ satisfy \ref{(G1)}-\ref{(G4)}, $m=2$ and $0<\varepsilon<1$, then there exists up to a translation a unique stationary solution $\rho\in L^2(\mathbb{R}^N)\cap\mathcal{P}(\mathbb{R}^N)$ of \eqref{eq1b}. Moreover, $\rho$ has the following properties:
\begin{itemize}
\item $\rho\in C^2(\mathrm{supp}\,\rho)\cap C(\mathbb{R}^N)$.
\item $\mathrm{supp}\,\rho=\overline{B_R(0)}$ for some $R>0$ depending on $\varepsilon$.
\item $\rho$ is radially symmetric.
\item $\rho$ is monotonically decreasing with increasing radius.
\item $\rho$ is the unique (up to a translation) global minimizer of the energy $E$ in $L^2(\mathbb{R}^N)\cap\mathcal{P}(\mathbb{R}^N)$.
\end{itemize} 
\end{thm}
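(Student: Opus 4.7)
My strategy combines the existence and structural results already established in Sections \ref{preliminaries}--\ref{globalminimizers} with a uniqueness argument tailored to the quadratic case $m=2$. Throughout the proof I would use that, by Section \ref{preliminaries}, every stationary solution is continuous on $\mathbb{R}^N$ and, up to a translation placing the centre of mass at the origin, radially symmetric and monotonically decreasing, while Section \ref{compactness} guarantees that for $0<\varepsilon<1$ any stationary solution is compactly supported, so $\mathrm{supp}\,\rho=\overline{B_R(0)}$ for some $R>0$. Section \ref{globalminimizers} also yields the existence of at least one stationary solution as a global minimizer of $E$.

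Once this reduction is in place, the stationary condition $\rho\nabla(\varepsilon\rho-G\ast\rho)=0$ simplifies for $m=2$ to the affine linear identity
\begin{equation*}
\varepsilon\rho(x) = (G\ast\rho)(x) + C_R, \qquad x\in \overline{B_R(0)},
\end{equation*}
for a single constant $C_R\in\mathbb{R}$, together with $\int\rho=1$. Regularity $\rho\in C^2(\mathrm{supp}\,\rho)$ is then immediate from $G\in C^2$ and differentiation under the integral. Passing to the radial variable turns this into a Fredholm-type integral equation on $[0,R]$ with the spherical-mean kernel of $G$.

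For uniqueness I would proceed in two steps. First, fixing $R$ and considering two radial decreasing stationary solutions $\rho_1,\rho_2$ with common support $\overline{B_R(0)}$, the difference $w=\rho_1-\rho_2$ has zero integral and satisfies $\varepsilon w = G\ast w + (C_{R,1}-C_{R,2})$ on $B_R(0)$. I would conclude $w\equiv 0$ by an energy/Fourier-type test exploiting $\hat G\le \hat G(0)=1$, the constraint $\hat w(0)=0$, and the radial decreasing structure; equivalently, the statement becomes that $\varepsilon$ lies outside the spectrum, on the mean-zero subspace, of the compact self-adjoint convolution operator $T_R f=(G\ast f)\chi_{B_R(0)}$ on $L^2(B_R(0))$.

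The main obstacle is the second step: ruling out two radial decreasing stationary solutions with different support radii. Adapting the one-dimensional argument of \cite{burgerdifrancescofranek}, I would assign to each admissible radius $R$ the unique coefficient $\varepsilon(R)$ for which the Fredholm integral equation of Step~1 admits a non-negative radial decreasing solution with unit mass, and establish strict monotonicity of $\varepsilon(\cdot)$, for instance by a Hadamard-type variation in $R$ combined with the symmetry result of \cite{carrillohittmeirvolzoneyao} and the compactness estimates of \cite{canizocarrillopatacchini} used to exclude mass escaping to infinity. The genuinely $N$-dimensional character of the integral equation, with its $(N-1)$-dimensional surface weight, is what makes this step substantially harder than in one dimension. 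Once injectivity of $\varepsilon(\cdot)$ is in hand, it combines with the fixed-radius uniqueness from Step~2 and the existence result of Section \ref{globalminimizers} to give uniqueness of stationary solutions up to translation, and in particular the final bullet identifying $\rho$ with the unique global minimizer of $E$.
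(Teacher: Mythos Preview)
Your overall architecture matches the paper's: reduce to radial decreasing compact profiles via Sections~\ref{preliminaries}--\ref{compactness}, obtain existence from Section~\ref{globalminimizers}, and then argue (i) uniqueness for a fixed support radius $R$, (ii) strict monotonicity of the map $R\mapsto\varepsilon(R)$. The difference, and the gap, is in how you execute step~(i).

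Your proposed Fourier/energy test does not close. Multiplying $\varepsilon w=G\ast w+c$ by $w$ and using $\int w=0$ gives $\varepsilon\|w\|_{L^2}^2=\int_{\mathbb{R}^N}\hat G(\xi)\,|\hat w(\xi)|^2\,d\xi$, but the only available bound is $\hat G\le\hat G(0)=1$, which yields $\varepsilon\|w\|_{L^2}^2\le\|w\|_{L^2}^2$ and says nothing when $\varepsilon<1$. The assumptions \ref{(G1)}--\ref{(G4)} do not force $\hat G<\varepsilon$ away from the origin, and no ``radial decreasing structure'' of $\rho_1,\rho_2$ transfers usefully to $w$. Your spectral reformulation is correct --- the claim is precisely that the eigenvalue $\varepsilon$ of $\mathcal{G}_R$ is simple --- but you give no mechanism to prove it.

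The paper's device is to differentiate the equation and work with $u=-\tilde\rho'$, which satisfies $\varepsilon u=\mathcal{H}_R[u]$ for the kernel $H(r,s)=\int_{\partial B_s(0)}G(re_1-y)\frac{y\cdot e_1}{|y|}\,d\sigma(y)$. The point is that $H(r,s)>0$ for $r,s>0$ (a direct reflection argument using the strict radial monotonicity of $G$), so $\mathcal{H}_R$ is \emph{strongly positive} on the cone of non-negative functions vanishing at $0$. The strong Krein--Rutman theorem then gives that $\varepsilon(R)$ is the spectral radius, simple, with a unique positive eigenfunction; this is exactly fixed-$R$ uniqueness. For step~(ii) the paper does not use a Hadamard variation but the algebraic symmetry $H(r,s)=\frac{s^{N-1}}{r^{N-1}}H(s,r)$: testing the eigenvalue equation for $u_R$ against $r^{N-1}u_{R+\delta}$ and swapping the roles of $r,s$ produces an explicit positive remainder term, whence $\varepsilon(R+\delta)>\varepsilon(R)$. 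Both steps are short once the operator $\mathcal{H}_R$ is introduced; without it, your Step~1 remains open.
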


\section{Continuity and symmetry of stationary solutions}
\label{preliminaries}
First, we recall some results being obtained among others in \cite[Section 2 and Section 3]{burgerdifrancescofranek} for $m=2$ which can be straightforwardly extended to all $m>1$.
\begin{lem}
\label{resultsfromburgerdifrancescofranek}
Let $G$ satisfy \ref{(G1)}-\ref{(G4)}, then the following results are satisfied for $m>1$ and $\varepsilon>0$:
\begin{enumerate}[label=(\roman*)]
\item\label{results(i)} The total mass $\int_{\mathbb{R}^N}\rho(x,t)\,dx$ of a solution of \eqref{eq1b} is preserved.
\item\label{results(ii)} The center of mass $\int_{\mathbb{R}^N}x\rho(x,t)\,dx$ of a solution of \eqref{eq1b} is preserved.
\item\label{results(iii)} If $\rho\in L^m(\mathbb{R}^N)\cap\mathcal{P}(\mathbb{R}^N)$ is a stationary solution of \eqref{eq1b}, then it holds that
\begin{equation*}
\varepsilon\rho^{m-1}-G\ast\rho=C
\end{equation*}
almost everywhere on every connected component of $\mathrm{supp}\,\rho$ where the constant $C$ may be different for each connected component. Moreover, we have $\rho^{m-1}\in C^2(\mathrm{supp}\,\rho)$ and $|\nabla\rho^{m-1}|\leq C^{\prime}$ in $\mathrm{supp}\,\rho$.
\item\label{results(iv)} If $\rho$ is a minimizer of the energy functional $E$ given in \eqref{eq2b} in $L^m(\mathbb{R}^N)\cap\mathcal{P}(\mathbb{R}^N)$, then we have
\begin{equation*}
\rho\nabla(\varepsilon\rho^{m-1}-G\ast\rho)=0
\end{equation*}
almost everywhere in $\mathbb{R}^N$.
\item\label{results(v)} If $\rho\in L^m(\mathbb{R}^N)\cap\mathcal{P}(\mathbb{R}^N)$ is a solution to
\begin{equation*}
\rho\nabla(\varepsilon\rho^{m-1}-G\ast\rho)=0,
\end{equation*}
then $\rho$ is a stationary point of the energy $E$ given in \eqref{eq2b}.
\item\label{results(vi)} If $\rho\in L^m(\mathbb{R}^N)\cap\mathcal{P}(\mathbb{R}^N)$ is a connected stationary solution of \eqref{eq1b}, then we have
\begin{equation*}
\varepsilon\rho^{m-1}(x)=(G\ast\rho)(x)+2E[\rho]-\int_{\mathbb{R}^N}\varepsilon\Big(\frac{2}{m}-1\Big)\rho^m(y)\,dy
\end{equation*}
for all $x\in\,\mathrm{supp}\,\rho$.
\end{enumerate}
\end{lem}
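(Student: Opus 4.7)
The plan is to follow the proofs in \cite[Sections 2--3]{burgerdifrancescofranek} almost verbatim, with the role of $\rho$ (where $m=2$ there) replaced by $\rho^{m-1}$ and the role of $\tfrac12\rho^2$ by $\tfrac{1}{m}\rho^m$; the arguments never use the special value $m=2$ once one checks that the convolution $G\ast\rho$ is smooth enough. Concretely, (G2) ensures $G\ast\rho\in C^2(\mathbb{R}^N)$ with $\nabla(G\ast\rho)=(\nabla G)\ast\rho$ bounded whenever $\rho\in L^1\cap L^m$, so all integrations by parts appearing below are legitimate once one has a suitable notion of solution; I would take this regularity for granted from the gradient-flow framework of \cite{ambrosiogiglisavare} cited in the introduction.

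For \ref{results(i)} and \ref{results(ii)} I would integrate \eqref{eq1b} against $1$ and $x_i$, respectively, and use the divergence structure together with the decay of $\rho$ and $\nabla(\varepsilon\rho^{m-1}-G\ast\rho)$ at infinity; both are one-line computations once the decay is justified. For \ref{results(iii)} the key move is to test the stationary equation $\nabla\cdot(\rho\nabla(\varepsilon\rho^{m-1}-G\ast\rho))=0$ against $\varepsilon\rho^{m-1}-G\ast\rho$ itself, giving
\begin{equation*}
\int_{\mathbb{R}^N}\rho\,\bigl|\nabla(\varepsilon\rho^{m-1}-G\ast\rho)\bigr|^2\,dx=0,
\end{equation*}
so $\nabla(\varepsilon\rho^{m-1}-G\ast\rho)=0$ wherever $\rho>0$; hence on every connected component of $\mathrm{supp}\,\rho$ the quantity $\varepsilon\rho^{m-1}-G\ast\rho$ is constant. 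Because $G\ast\rho\in C^2$ and is equal, up to an additive constant, to $\varepsilon\rho^{m-1}$ on each component, we inherit $\rho^{m-1}\in C^2(\mathrm{supp}\,\rho)$ and the bound $|\nabla\rho^{m-1}|=\varepsilon^{-1}|\nabla(G\ast\rho)|\le\varepsilon^{-1}\|\nabla G\|_{L^\infty\ast L^1}$.

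For \ref{results(iv)} and \ref{results(v)} I would compute the first variation of $E$ along mass-preserving perturbations $\rho_s=\rho+s\phi$ with $\int\phi=0$ and $\phi$ supported where $\rho$ is bounded away from $0$: the Euler--Lagrange identity
\begin{equation*}
\varepsilon\rho^{m-1}-G\ast\rho=\mu\quad\text{on }\{\rho>0\}
\end{equation*}
is equivalent, after taking gradients and multiplying by $\rho$, to $\rho\nabla(\varepsilon\rho^{m-1}-G\ast\rho)=0$ almost everywhere; the converse direction \ref{results(v)} follows by reading the same computation backwards, as the Euler--Lagrange condition characterises stationary points of $E$ under the mass constraint. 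Finally, for \ref{results(vi)} I would multiply the identity $\varepsilon\rho^{m-1}(x)=G\ast\rho(x)+C$ from \ref{results(iii)} by $\rho(x)$, integrate over $\mathbb{R}^N$, and use $\int\rho=1$ together with the definition \eqref{eq2b} of $E$ to solve for $C$, giving the stated formula.

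The main obstacle is purely bookkeeping: verifying the regularity and decay needed to integrate by parts in \ref{results(i)}--\ref{results(ii)} and to test against $\varepsilon\rho^{m-1}-G\ast\rho$ in \ref{results(iii)}. All of these follow from assumptions \ref{(G1)}--\ref{(G2)} and the $L^1\cap L^m$ control of $\rho$, but the generalisation from $m=2$ to $m>1$ requires care that $\rho^{m-1}$ is handled correctly when $m-1<1$, where it is less regular than $\rho$ itself; the argument in \cite{burgerdifrancescofranek} is robust enough to cover this case without change.
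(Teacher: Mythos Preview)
Your proposal is correct and matches the paper's approach: the paper itself gives no proof at all, simply stating that the results are ``obtained among others in \cite[Section 2 and Section 3]{burgerdifrancescofranek} for $m=2$'' and ``can be straightforwardly extended to all $m>1$.'' Your sketch of how that extension goes---replacing $\rho$ by $\rho^{m-1}$ and $\tfrac12\rho^2$ by $\tfrac{1}{m}\rho^m$, using (G2) to ensure $G\ast\rho\in C^2$ with bounded gradient, and then rerunning the variational and integration-by-parts arguments---is exactly the content the paper is deferring to the reference, so you have supplied more detail than the paper does.
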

In addition, in \cite[Corollary 2.3]{burgerdifrancescofranek} it is shown for $N=1$ that a stationary solution of \eqref{eq2b} is continuous in $\mathbb{R}$. It is easy to prove this result for higher dimensions.
\begin{lem}
\label{continuityofstationarysolution}
If $\rho\in L^m(\mathbb{R}^N)\cap\mathcal{P}(\mathbb{R}^N)$ is a stationary solution of \eqref{eq1b}, $m>1$, $\varepsilon>0$ and $G$ satisfies \ref{(G1)}-\ref{(G4)}, then $\rho$ is continuous in $\mathbb{R}^N$.
\end{lem}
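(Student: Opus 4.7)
The plan is to combine the Euler-Lagrange identity from Lemma \ref{resultsfromburgerdifrancescofranek}\ref{results(iii)} with the smoothing provided by convolution against a bounded continuous kernel. On $\mathrm{supp}\,\rho$ that identity writes $\rho$ explicitly in terms of a continuous function; off the support $\rho$ vanishes; the only real work is to match the two pieces continuously across $\partial\,\mathrm{supp}\,\rho$.

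First I would show that $G \ast \rho$ is continuous on all of $\mathbb{R}^N$: by \ref{(G2)} the kernel $G$ is bounded and continuous, and $\rho \in L^1(\mathbb{R}^N)$, so dominated convergence yields $(G \ast \rho)(x_n) \to (G \ast \rho)(x)$ whenever $x_n \to x$. Next, on each connected component $\Omega$ of $\mathrm{supp}\,\rho$, Lemma \ref{resultsfromburgerdifrancescofranek}\ref{results(iii)} supplies a constant $C_\Omega$ with $\varepsilon \rho^{m-1} = G \ast \rho + C_\Omega$ almost everywhere on $\Omega$. Redefining $\rho$ on a null set gives
\[
\rho(x) = \left( \frac{(G \ast \rho)(x) + C_\Omega}{\varepsilon} \right)^{1/(m-1)} \qquad\text{for all } x \in \Omega,
\]
which is continuous as a composition of continuous maps, since $m > 1$ makes $y \mapsto y^{1/(m-1)}$ continuous on $[0,\infty)$. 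Off $\mathrm{supp}\,\rho$, I simply take $\rho \equiv 0$. This already yields continuity separately on $\mathrm{supp}\,\rho$ and on its open complement.

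The main obstacle is continuity at a boundary point $x_0 \in \partial\,\mathrm{supp}\,\rho$. The limit from outside is $0$, whereas the limit from inside any component $\Omega$ with $x_0 \in \overline{\Omega}$ equals $((G \ast \rho)(x_0) + C_\Omega)^{1/(m-1)}/\varepsilon^{1/(m-1)}$, so I must show that $(G \ast \rho)(x_0) + C_\Omega = 0$ at every such $x_0$. I would proceed by contradiction: if the quantity were strictly positive at $x_0$, then by continuity of $G \ast \rho$ it would remain bounded below by a positive constant on an open neighborhood $U$ of $x_0$, forcing $\rho \geq \delta > 0$ on $U \cap \Omega$ while $\rho = 0$ on points of $U$ outside $\mathrm{supp}\,\rho$ that accumulate at $x_0$. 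To rule this out I would test the distributional stationary equation $\nabla \cdot (\rho \nabla(\varepsilon \rho^{m-1} - G \ast \rho)) = 0$ against smooth cut-offs supported near $x_0$: a genuine jump of $\rho^{m-1}$ across $\partial\,\mathrm{supp}\,\rho$ produces a singular flux concentrated on the boundary that the smooth interior contributions cannot cancel. Combined with the gradient bound $|\nabla \rho^{m-1}| \leq C'$ on $\mathrm{supp}\,\rho$ from Lemma \ref{resultsfromburgerdifrancescofranek}\ref{results(iii)}, this should force the boundary trace of $\rho^{m-1}$ from inside $\Omega$ to vanish, closing the argument. Everything else in the proof is bookkeeping; this boundary matching is where I expect the real difficulty to lie.
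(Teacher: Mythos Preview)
Your route is genuinely different from the paper's, and the place you flag as ``the real difficulty'' is in fact a gap. Once you have invoked Lemma~\ref{resultsfromburgerdifrancescofranek}\ref{results(iii)}, the flux $\rho\nabla(\varepsilon\rho^{m-1}-G\ast\rho)$ already vanishes almost everywhere as an $L^1$ function: on each component of the support the bracket is constant, and off the support the prefactor $\rho$ is zero. Consequently, testing the distributional equation $\nabla\cdot\big(\rho\nabla(\varepsilon\rho^{m-1}-G\ast\rho)\big)=0$ against cut-offs near a boundary point yields no new information --- there is no ``singular flux'' to detect, because the flux is the zero function in $L^1$, regardless of whether $\rho^{m-1}$ has a jump across $\partial\,\mathrm{supp}\,\rho$. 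Your contradiction never gets started. Put differently, the pointwise Euler--Lagrange identity does not by itself pin down the constant $C_\Omega$, and nothing in your argument forces the boundary trace of $\rho$ to vanish. Making this work would require an additional ingredient you have not supplied, and without knowing a priori that $\partial\,\mathrm{supp}\,\rho$ has any regularity, even formulating the jump condition precisely is delicate.

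The paper avoids all of this by a short Sobolev bootstrap on $\rho^m$ rather than on $\rho^{m-1}$. Multiplying the differentiated Euler--Lagrange relation by $\rho$ gives the global identity
\[
\varepsilon\,\frac{m-1}{m}\,\nabla(\rho^m)=\rho\,\nabla(G\ast\rho)
\]
in the sense of distributions on all of $\mathbb{R}^N$; there is no boundary to worry about because $\rho^m$ vanishes continuously (in the $W^{1,p}$ sense) where $\rho$ does. Since $|\nabla(G\ast\rho)|\le C'$ by \ref{(G2)} and \ref{(G4)}, one obtains $\|\nabla(\rho^m)\|_{L^p}\lesssim\|\rho\|_{L^p}$, and starting from $\rho\in L^1\cap L^m$ one iterates Sobolev embedding until $\rho^m\in W^{1,r}$ for some $r>N$, hence $\rho^m\in C(\mathbb{R}^N)$ and so $\rho\in C(\mathbb{R}^N)$. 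The passage to $\rho^m$ is exactly what absorbs the potential boundary jump that blocks your approach.
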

\begin{proof}
By Lemma \ref{resultsfromburgerdifrancescofranek}\ref{results(iii)}, a stationary solution $\rho\in L^m(\mathbb{R}^N)\cap\mathcal{P}(\mathbb{R}^N)$ of \eqref{eq1b} satisfies after differentiation and multiplication by $\rho$
\begin{equation*}
\varepsilon\frac{m-1}{m}\nabla(\rho^m)=\rho\nabla(G\ast\rho).
\end{equation*}
Due to \ref{(G2)}, \ref{(G4)} and $\rho\in\mathcal{P}(\mathbb{R}^N)$, we have $|\nabla(G\ast\rho)(x)|\leq C^\prime$ such that we obtain $\|\nabla(\rho^m)\|_{L^p}\leq \varepsilon^{-1}\frac{m}{m-1}C^\prime\|\rho\|_{L^p}$ for some $p\geq1$. Since $\rho\in L^1(\mathbb{R}^N)\cap L^m(\mathbb{R}^N)$, it follows that $\rho^m\in W^{1,1}(\mathbb{R}^N)$ and by Sobolev embedding we conclude for $N>1$ that $\rho^m\in L^p(\mathbb{R}^N)$ for all $1\leq p\leq\frac{N}{N-1}$. Hence, $\rho^m\in W^{1,p}(\mathbb{R}^N)$ and repeating the argument for $1<p<N$ it follows $\rho^m\in W^{1,q}(\mathbb{R}^N)$ for all $1\leq q\leq\frac{Np}{N-p}$. By Sobolev embedding, we conclude that $\rho^m\in C(\mathbb{R}^N)$ if $q>N$. Otherwise, we repeat the argument for $p<q<N$ until $\rho^m\in W^{1,r}(\mathbb{R}^N)$ for some $r>N$.
\end{proof}
Using a continuous Steiner symmetrization, in \cite[Section 2]{carrillohittmeirvolzoneyao} it is shown under suitable assumptions of the interaction potential that certain stationary solutions of an aggregation equation with a degenerate diffusion are radially symmetric and monotonically decreasing up to a translation. Due to our assumptions of the interaction potential $G$, in particular its radial symmetry, strict monotonicity and boundedness, and due to Lemma \ref{resultsfromburgerdifrancescofranek}\ref{results(iii)}, the symmetry result \cite[Theorem 2.2]{carrillohittmeirvolzoneyao} applies such that we can state the following theorem.
\begin{thm}
\label{symmetryofstationarysolutions}
If $\rho\in L^m(\mathbb{R}^N)\cap\mathcal{P}(\mathbb{R}^N)$ is a stationary solution of \eqref{eq1b}, $m>1$, $\varepsilon>0$ and $G$ satisfies \ref{(G1)}-\ref{(G4)}, then $\rho$ is radially symmetric and monotonically decreasing up to a translation. In particular, $\rho$ has a connected support.
\end{thm}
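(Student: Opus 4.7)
The plan is to apply \cite[Theorem 2.2]{carrillohittmeirvolzoneyao} essentially as a black box, so the real work is to verify that our situation satisfies the hypotheses of that result and then to extract the connectedness statement.

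First I would check the hypotheses of the cited symmetry theorem one by one against our setting. The interaction potential is radially symmetric and strictly radially decreasing by \ref{(G3)}; it lies in $W^{1,1}\cap L^\infty\cap C^2$ and is integrable by \ref{(G2)}; its boundedness, which is crucial for the continuous Steiner symmetrization argument (since it rules out the singular behaviour that occurs for Newtonian or Bessel kernels), is exactly the content of \ref{(G2)} together with \ref{(G4)}. On the side of the density, Lemma \ref{continuityofstationarysolution} gives $\rho\in C(\mathbb{R}^N)$, and Lemma \ref{resultsfromburgerdifrancescofranek}\ref{results(iii)} supplies the Euler--Lagrange identity
\begin{equation*}
\varepsilon\rho^{m-1}-G\ast\rho=C
\end{equation*}
on each connected component of $\mathrm{supp}\,\rho$, which is the variational characterisation of stationary solutions that \cite{carrillohittmeirvolzoneyao} takes as input. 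Finally $\rho\in L^m(\mathbb{R}^N)\cap\mathcal{P}(\mathbb{R}^N)$ is a probability density, so all normalisation hypotheses are met.

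Having checked compatibility, I would invoke \cite[Theorem 2.2]{carrillohittmeirvolzoneyao} to conclude that $\rho$ coincides almost everywhere with its Schwarz symmetrisation about some point $x_0\in\mathbb{R}^N$; combined with the continuity given by Lemma \ref{continuityofstationarysolution}, the identification is pointwise, so $\rho(\cdot+x_0)$ is radially symmetric and non-increasing in $|x|$.

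For the final assertion about connectedness of the support, I would argue directly from the radial monotonicity. Translating so that $x_0=0$, set $R\coloneqq\sup\{r\geq 0:\rho(x)>0\text{ for some }|x|=r\}\in(0,+\infty]$. Since $\rho$ is continuous, radial, and non-increasing in $r$, we have $\rho(x)>0$ for all $|x|<R$ and $\rho(x)=0$ for all $|x|>R$, so $\mathrm{supp}\,\rho=\overline{B_R(0)}$ if $R<\infty$ and $\mathrm{supp}\,\rho=\mathbb{R}^N$ if $R=\infty$; both cases give a connected support. The only obstacle I foresee is making sure that the regularity/variational framework of \cite{carrillohittmeirvolzoneyao} matches ours precisely (in particular that their notion of ``stationary solution'' is implied by our EL identity on each component rather than requiring a single global constant), but this is addressed by \ref{results(iii)} together with the hypothesis that $G$ is bounded.
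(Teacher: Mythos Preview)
Your proposal is correct and matches the paper's approach exactly: the paper does not give a formal proof but simply remarks, in the paragraph preceding the theorem, that \cite[Theorem 2.2]{carrillohittmeirvolzoneyao} applies thanks to the radial symmetry, strict monotonicity and boundedness of $G$ together with Lemma \ref{resultsfromburgerdifrancescofranek}\ref{results(iii)}. Your write-up is in fact more detailed than the paper's, since you also invoke Lemma \ref{continuityofstationarysolution} and spell out the elementary connectedness argument from radial monotonicity, neither of which the paper makes explicit.
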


\section{Compactness of stationary solutions}
\label{compactness}
In \cite{canizocarrillopatacchini}, the existence of a global minimizer of the energy functional
\begin{equation*}
F[\mu]=\frac{1}{2}\int_{\mathbb{R}^N}\int_{\mathbb{R}^N}W(x-y)\,d\mu(y)d\mu(x)
\end{equation*}
is shown with $\mu$ being a probability measure and $W$ an appropriate interaction potential. One key idea to obtain this result is first considering a global minimizer of $F$ in the space of probability measures with support in a given ball and to show that in a fixed neighbourhood of each point of the support of this global minimizer there is at least a certain amount of mass concentrated. To derive this statement, one has to assume that the given ball is large enough and that the potential $W$ is unstable, i.e.\ that a probability measure $\mu$ with $F[\mu]<\lim_{|x|\rightarrow\infty}W(x)$ exists.

Under a suitable condition for the energy, we perform a calculation similar to a part of the proof in \cite[Lemma 2.6]{canizocarrillopatacchini} to conclude that stationary solutions of \eqref{eq1b}, i.e.\ solutions $\rho\in L^m(\mathbb{R}^N)\cap\mathcal{P}(\mathbb{R}^N)$ to the equation
\begin{equation}
\rho\nabla(\varepsilon\rho^{m-1}-G\ast\rho)=0
\end{equation}
in $\mathbb{R}^N$ are compactly supported.

\begin{lem}
\label{condforcompsupp}
Let $G$ satisfy \ref{(G1)}-\ref{(G4)}. If $\rho\in L^m(\mathbb{R}^N)\cap\mathcal{P}(\mathbb{R}^N)$ is a stationary solution of \eqref{eq1b}, $m>1$, $\varepsilon>0$ and if there exists a constant $K<0$ such that
\begin{equation}
\label{condforcompsuppeq1}
2E[\rho]-\int_{\mathbb{R}^N}\varepsilon\Big(\frac{2}{m}-1\Big)\rho^m(y)\,dy\leq K,
\end{equation}
then $\rho$ is compactly supported.
\end{lem}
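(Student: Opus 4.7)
The plan is to combine the identity from Lemma \ref{resultsfromburgerdifrancescofranek}\ref{results(vi)} with the radial symmetry furnished by Theorem \ref{symmetryofstationarysolutions} to rule out unbounded support via a decay-at-infinity argument; the hypothesis $K<0$ provides the sign condition that drives the contradiction.

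By Theorem \ref{symmetryofstationarysolutions} I may assume, after a translation, that $\rho$ is radially symmetric, monotonically decreasing and has connected support, and by Lemma \ref{continuityofstationarysolution} that $\rho$ is continuous on $\mathbb{R}^N$. A continuous, radial, monotonically decreasing probability density with connected support has support of the form $\overline{B_R(0)}$ for some $R\in(0,\infty]$, so it suffices to exclude the case $R=\infty$. Assuming $\mathrm{supp}\,\rho=\mathbb{R}^N$, Lemma \ref{resultsfromburgerdifrancescofranek}\ref{results(vi)} applies on all of $\mathbb{R}^N$ and yields
\[
\varepsilon\rho^{m-1}(x) \;=\; (G\ast\rho)(x) + C_0, \qquad x\in\mathbb{R}^N,
\]
where $C_0=2E[\rho]-\int_{\mathbb{R}^N}\varepsilon(2/m-1)\rho^m(y)\,dy\leq K<0$ by assumption. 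Since $\varepsilon\rho^{m-1}\geq0$, this forces $(G\ast\rho)(x)\geq -C_0\geq |K|>0$ for every $x\in\mathbb{R}^N$.

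I would then reach a contradiction by showing that $(G\ast\rho)(x)\to 0$ as $|x|\to\infty$, via the standard splitting: given any $\eta>0$ choose $R_1$ so that $\int_{|y|>R_1}\rho(y)\,dy<\eta/(2\|G\|_{L^\infty})$, and, using \ref{(G4)}, choose $R_0$ so that $g(r)<\eta/2$ for $r>R_0$; for $|x|>R_0+R_1$ the contributions of $|y|\leq R_1$ and $|y|>R_1$ to the convolution are each at most $\eta/2$. The heart of the argument is really the role of the sign of $C_0$: the condition $C_0\leq K<0$ is exactly what makes the identity in \ref{results(vi)} incompatible with the simultaneous decay of $\rho$ and $G\ast\rho$ at infinity. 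Without a strictly negative upper bound on $C_0$, both sides of that identity would be permitted to vanish compatibly at infinity and the argument would fail; this is the one place where the hypothesis $K<0$ is used in an essential way.
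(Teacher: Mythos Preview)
Your argument is correct. It is, however, a genuinely different route from the paper's proof. The paper does not argue by contradiction via decay of $G\ast\rho$ at infinity; instead it fixes $A\in(K,0)$ with $-G(0)<A$, picks $r>0$ so that $-G(x)>A$ for $|x|>r$, and uses the identity from Lemma~\ref{resultsfromburgerdifrancescofranek}\ref{results(vi)} together with the bound $\varepsilon\rho^{m-1}(z)\geq 0$ to deduce the uniform mass concentration estimate
\[
\int_{B_r(z)}\rho(y)\,dy\;\geq\;\frac{A-K}{A+G(0)}\;>\;0\qquad\text{for every }z\in\mathrm{supp}\,\rho,
\]
which forces $\mathrm{supp}\,\rho$ to be bounded since the total mass is one. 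The paper itself remarks immediately after the proof that a more direct argument along the lines you give (using Theorem~\ref{symmetryofstationarysolutions}, continuity, and decay at infinity) is available, but deliberately chooses the mass-concentration approach because the same estimate is reused in Section~\ref{globalminimizers} to bound the diameter of minimizers $\rho_R$ uniformly in $R$ and thereby prove existence of a global minimizer in the whole space. Your proof is cleaner for the lemma in isolation; the paper's proof is an investment that pays off later.
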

\begin{proof}
Let $A<0$ with $K<A$ and $-G(0)<A$. By our assumptions of $G$, we know that there is an $r>0$ with $-G(x)>A$ for all $|x|>r$. Since $\rho$ has connected support due to Theorem \ref{symmetryofstationarysolutions}, by Lemma \ref{resultsfromburgerdifrancescofranek}\ref{results(vi)} it follows that
\begin{equation*}
\begin{aligned}
K & \geq 2E[\rho]-\int_{\mathbb{R}^N}\varepsilon\Big(\frac{2}{m}-1\Big)\rho^m(y)\,dy\\
& =\varepsilon\rho^{m-1}(z)-\int_{\mathbb{R}^N}G(z-y)\rho(y)\,dy\\
& \geq-G(0)\int_{B_r(z)}\rho(y)\,dy+A\int_{\mathbb{R}^N\setminus B_r(z)}\rho(y)\,dy\\
& =(-G(0)-A)\int_{B_r(z)}\rho(y)\,dy+A
\end{aligned}
\end{equation*}
for $z\in\,\mathrm{supp}\,\rho$. Due to $-G(0)-A<0$ and $K-A<0$, we obtain
\begin{equation*}
\int_{B_r(z)}\rho(y)\,dy\geq\frac{A-K}{A+G(0)}\eqqcolon C>0.
\end{equation*}
The constant $C$ is independent from $z$ such that this bound holds for every $z\in\,\mathrm{supp}\,\rho$ and we conclude that $\rho$ has a compact support because otherwise we obtain a contradiction.
\end{proof}
The proof of Lemma \ref{condforcompsupp} can be obtained more directly using Theorem \ref{symmetryofstationarysolutions}, Lemma \ref{resultsfromburgerdifrancescofranek}\ref{results(iv)} and the continuity of a stationary solution of \eqref{eq1b} in $\mathbb{R}^N$ (cf. Lemma \ref{continuityofstationarysolution}). However, we will use in Section \ref{globalminimizers} the above argumentation to show the existence of global minimizers of $E$ in $L^m(\mathbb{R}^N)\cap\mathcal{P}(\mathbb{R}^N)$.

Using the continuity of a stationary solution of \eqref{eq1b} in $\mathbb{R}^N$, we can actually show that $\rho\in L^m(\mathbb{R}^N)\cap\mathcal{P}(\mathbb{R}^N)$ being a compact stationary solution of \eqref{eq1b} is equivalent to the existence of a constant $K<0$ such that
\begin{equation*}
2E[\rho]-\int_{\mathbb{R}^N}\varepsilon\Big(\frac{2}{m}-1\Big)\rho^m(y)\,dy\leq K
\end{equation*}
is satisfied.
\begin{lem}
\label{compsuppsatisfycond}
Let $G$ satisfy \ref{(G1)}-\ref{(G4)}. If $\rho\in L^m(\mathbb{R}^N)\cap\mathcal{P}(\mathbb{R}^N)$ is a compact stationary solution of \eqref{eq1b}, $m>1$ and $\varepsilon>0$, then there exists a constant $K<0$ such that
\begin{equation*}
2E[\rho]-\int_{\mathbb{R}^N}\varepsilon\Big(\frac{2}{m}-1\Big)\rho^m(y)\,dy\leq K
\end{equation*}
is satisfied.
\end{lem}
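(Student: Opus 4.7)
The plan is to evaluate the pointwise identity from Lemma~\ref{resultsfromburgerdifrancescofranek}\ref{results(vi)} at a point of $\mathrm{supp}\,\rho$ where $\rho$ vanishes, and to observe that the right-hand side one obtains is strictly negative. Since $\rho$ has connected support by Theorem~\ref{symmetryofstationarysolutions}, Lemma~\ref{resultsfromburgerdifrancescofranek}\ref{results(vi)} applies on all of $\mathrm{supp}\,\rho$; after a translation we may assume this support is a closed ball $\overline{B_R(0)}$ with $R>0$. By Lemma~\ref{continuityofstationarysolution}, $\rho$ is continuous on $\mathbb{R}^N$, and since $\rho\equiv 0$ on $\mathbb{R}^N\setminus\overline{B_R(0)}$, continuity forces $\rho(x_0)=0$ for every boundary point $x_0$ with $|x_0|=R$.

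Fixing such an $x_0\in\mathrm{supp}\,\rho$, the identity in Lemma~\ref{resultsfromburgerdifrancescofranek}\ref{results(vi)} combined with $\rho(x_0)=0$ gives
\begin{equation*}
2E[\rho]-\int_{\mathbb{R}^N}\varepsilon\Big(\frac{2}{m}-1\Big)\rho^m(y)\,dy
=\varepsilon\rho^{m-1}(x_0)-(G\ast\rho)(x_0)
=-(G\ast\rho)(x_0).
\end{equation*}
It remains to check that the right-hand side is strictly negative. Assumptions \ref{(G3)} and \ref{(G4)} together imply $g(r)>0$ for every $r\geq0$, because $g$ is strictly decreasing and converges to $0$ at infinity; hence $G$ is strictly positive pointwise on $\mathbb{R}^N$. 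Since $\rho\geq 0$ and $\|\rho\|_{L^1}=1$, this yields $(G\ast\rho)(x_0)>0$. Setting $K\coloneqq -(G\ast\rho)(x_0)<0$ then proves the claim (with equality, in fact).

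Every ingredient needed here—the pointwise identity on the support, continuity of $\rho$ up to the boundary of its support, connectedness of $\mathrm{supp}\,\rho$, and strict positivity of $G$—has already been established in earlier sections, so I do not anticipate a real obstacle. The only mild subtlety is that Lemma~\ref{resultsfromburgerdifrancescofranek}\ref{results(vi)} is formulated on $\mathrm{supp}\,\rho$ and must be used precisely at a boundary point; this is legitimate because both sides of the identity are continuous, and it is exactly this boundary evaluation (forcing the $\varepsilon\rho^{m-1}$ contribution to drop out) that produces the required strict negativity.
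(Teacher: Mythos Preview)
Your proof is correct and follows essentially the same approach as the paper's: translate so that $\mathrm{supp}\,\rho=\overline{B_R(0)}$, use continuity to get $\rho=0$ on the boundary, evaluate the identity from Lemma~\ref{resultsfromburgerdifrancescofranek}\ref{results(vi)} there, and take $K=-(G\ast\rho)(x_0)$. The only cosmetic difference is that the paper writes $(G\ast\rho)(R)$ using radial notation and leaves the strict negativity implicit, whereas you spell it out from \ref{(G3)}--\ref{(G4)}.
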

\begin{proof}
Due to the symmetry result of Theorem \ref{symmetryofstationarysolutions} and since $\rho$ has compact support, there exists an $R<\infty$ such that we can assume that $\mathrm{supp}\,\rho=\overline{B_R(0)}$ without loss of generality. Then, by Lemma \ref{resultsfromburgerdifrancescofranek}\ref{results(vi)} and by the continuity of $\rho$ in $\mathbb{R}^N$ it follows that
\begin{equation*}
2E[\rho]-\int_{\mathbb{R}^N}\varepsilon\Big(\frac{2}{m}-1\Big)\rho^m(y)\,dy=-(G\ast\rho)(R).
\end{equation*}
Now, choose $K=-(G\ast\rho)(R)$.
\end{proof}

\begin{rmk}
\label{differentcompcond}
Using the definition of the energy $E$, we can rewrite the compactness condition in Lemma \ref{condforcompsupp} for a stationary solution $\rho\in L^m(\mathbb{R}^N)\cap\mathcal{P}(\mathbb{R}^N)$ of \eqref{eq1b} as
\begin{equation}
\label{differentcompcondeq1}
\int_{\mathbb{R}^N}\varepsilon\rho^m(x)\,dx-\int_{\mathbb{R}^N}(G\ast\rho)(x)\rho(x)\,dx\leq K
\end{equation}
with a constant $K<0$. Using Lemma \ref{resultsfromburgerdifrancescofranek}\ref{results(vi)}, we can also rewrite the compactness condition as
\begin{equation}
\label{differentcompcondeq2}
\varepsilon\rho^{m-1}(x)-(G\ast\rho)(x)\leq K
\end{equation}
for all $x\in\,\mathrm{supp}\,\rho$.

Moreover, due to \ref{(G3)} and \ref{(G4)}, a stationary solution $\rho\in L^m(\mathbb{R}^N)\cap\mathcal{P}(\mathbb{R}^N)$ of \eqref{eq1b} with non-compact support has to satisfy equation \eqref{condforcompsuppeq1}, \eqref{differentcompcondeq1} or \eqref{differentcompcondeq2} with equality and $K=0$.
\end{rmk}

\begin{cor}
\label{propcompandnoncompsol}
Let $G$ satisfy \ref{(G1)}-\ref{(G4)}, $m>1$ and $\varepsilon>0$.

A compact stationary solution $\rho\in L^m(\mathbb{R}^N)\cap\mathcal{P}(\mathbb{R}^N)$ of \eqref{eq1b} satisfies (after a translation such that the center of mass is in zero)
\begin{equation}
\label{equationforcompactsolution}
\varepsilon\rho^{m-1}(x)=(G\ast\rho)(x)-(G\ast\rho)(R)
\end{equation}
for all $x\in\,\mathrm{supp}\,\rho$ where $\mathrm{supp}\,\rho=\overline{B_R(0)}$ for some $R<\infty$. Moreover, we have
\begin{equation*}
\|\rho\|_{L^{m}}^{m}=\frac{1}{\varepsilon}\Big(\int_{\mathbb{R}^N}(G\ast\rho)(x)\rho(x)\,dx-(G\ast\rho)(R)\Big)
\end{equation*}
and the energy of a compact stationary solution of \eqref{eq1b} can be written as
\begin{equation*}
\begin{aligned}
E[\rho] & =\int_{\mathbb{R}^N}\varepsilon\Big(\frac{1}{m}-\frac{1}{2}\Big)\rho^m(y)\,dy-\frac{1}{2}(G\ast\rho)(R)\\
& =\Big(\frac{1}{m}-\frac{1}{2}\Big)\int_{\mathbb{R}^N}(G\ast\rho)(x)\rho(x)\,dx-\frac{1}{m}(G\ast\rho)(R).
\end{aligned}
\end{equation*}

A non-compact stationary solution $\rho\in L^m(\mathbb{R}^N)\cap\mathcal{P}(\mathbb{R}^N)$ of \eqref{eq1b} satisfies
\begin{equation}
\label{equationforconnectednoncompactsolution}
\varepsilon\rho^{m-1}(x)=(G\ast\rho)(x)
\end{equation}
in $\mathbb{R}^N$. Moreover, we have
\begin{equation*}
\|\rho\|_{L^{m}}^{m}=\frac{1}{\varepsilon}\int_{\mathbb{R}^N}(G\ast\rho)(x)\rho(x)\,dx
\end{equation*}
and the energy of a non-compact stationary solution of \eqref{eq1b} can be written as
\begin{equation*}
\begin{aligned}
E[\rho] & =\int_{\mathbb{R}^N}\varepsilon\Big(\frac{1}{m}-\frac{1}{2}\Big)\rho^m(y)\,dy\\
& =\Big(\frac{1}{m}-\frac{1}{2}\Big)\int_{\mathbb{R}^N}(G\ast\rho)(x)\rho(x)\,dx.
\end{aligned}
\end{equation*}
\end{cor}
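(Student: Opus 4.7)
By Theorem \ref{symmetryofstationarysolutions}, $\rho$ is radially symmetric, monotonically decreasing and has connected support; after the indicated translation the support is either $\overline{B_R(0)}$ for some finite $R$ or, if unbounded, all of $\mathbb{R}^N$ by symmetry and monotonicity. Lemma \ref{continuityofstationarysolution} gives continuity of $\rho$ on $\mathbb{R}^N$, so in the compact case $\rho$ vanishes on the sphere $\{|x|=R\}$. Lemma \ref{resultsfromburgerdifrancescofranek}\ref{results(vi)} supplies the basic identity
\begin{equation*}
\varepsilon\rho^{m-1}(x)=(G\ast\rho)(x)+2E[\rho]-\int_{\mathbb{R}^N}\varepsilon\Big(\frac{2}{m}-1\Big)\rho^m(y)\,dy
\end{equation*}
for every $x\in\mathrm{supp}\,\rho$, which is the common starting point for both cases.

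In the compact case the plan is to evaluate this identity at a point $x$ with $|x|=R$; the left-hand side vanishes by continuity, so the constant $2E[\rho]-\int_{\mathbb{R}^N}\varepsilon(\tfrac{2}{m}-1)\rho^m$ must equal $-(G\ast\rho)(R)$, and substituting back produces \eqref{equationforcompactsolution}. In the non-compact case I would invoke Remark \ref{differentcompcond}, which already states that \eqref{differentcompcondeq2} must hold with equality and $K=0$; combined with $\mathrm{supp}\,\rho=\mathbb{R}^N$ this is precisely \eqref{equationforconnectednoncompactsolution}. Alternatively one could pass to the limit $|x|\to\infty$ in the identity above, using $G\ast\rho\to0$ at infinity, which is a standard consequence of $G\in L^1\cap L^\infty$ together with the decay of $\rho$ forced by radial monotonicity and integrability.

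The $L^m$-norm expressions are then immediate: multiply \eqref{equationforcompactsolution} (respectively \eqref{equationforconnectednoncompactsolution}) by $\rho$, integrate over $\mathbb{R}^N$, and use $\|\rho\|_{L^1}=1$. Finally, substituting either $L^m$-norm identity into the definition
\begin{equation*}
E[\rho]=\frac{\varepsilon}{m}\int_{\mathbb{R}^N}\rho^m(x)\,dx-\frac{1}{2}\int_{\mathbb{R}^N}(G\ast\rho)(x)\rho(x)\,dx
\end{equation*}
and collecting terms delivers both equivalent energy formulas in the compact case and the single form in the non-compact case. No genuine obstacle arises; the only point requiring care is the choice of evaluation point for Lemma \ref{resultsfromburgerdifrancescofranek}\ref{results(vi)}, namely the boundary sphere $\{|x|=R\}$, which is what forces the additive constant to be the explicit value $-(G\ast\rho)(R)$ in the compact setting and $0$ in the non-compact one.
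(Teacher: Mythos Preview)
Your proof is correct and follows essentially the same approach as the paper: invoke continuity (Lemma \ref{continuityofstationarysolution}) and radial symmetry/monotonicity (Theorem \ref{symmetryofstationarysolutions}), then use Lemma \ref{resultsfromburgerdifrancescofranek}\ref{results(vi)} and evaluate at the boundary $|x|=R$ (respectively let $|x|\to\infty$ or use Remark \ref{differentcompcond}) to identify the constant, after which the $L^m$-norm and energy formulas follow by multiplying by $\rho$ and integrating. The paper's own proof is terse to the point of merely naming these ingredients, so your write-up is in fact a faithful expansion of it.
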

\begin{proof}
By Lemma \ref{continuityofstationarysolution} and Theorem \ref{symmetryofstationarysolutions}, a stationary solution of \eqref{eq1b} is continuous as well as radially symmetric and monotonically decreasing. Using Lemma \ref{resultsfromburgerdifrancescofranek}\ref{results(vi)}, we immediately obtain equations \eqref{equationforcompactsolution} and \eqref{equationforconnectednoncompactsolution}. These equation can be used to calculate the $L^m$-norm and the energy $E$ of $\rho$. 
\end{proof}

\begin{rmk}
\label{mequals2nononcompact}
For $m=2$, there cannot exist a non-compact stationary solution $\rho\in L^2(\mathbb{R}^N)\cap\mathcal{P}(\mathbb{R}^N)$ of \eqref{eq1b}, which satisfies $\mathrm{supp}\,\rho=\mathbb{R}^N$ due to Theorem \ref{symmetryofstationarysolutions}, since when integrating \eqref{equationforconnectednoncompactsolution} over $\mathbb{R}^N$ we obtain $\varepsilon=1$. Using a Fourier transform, it is shown in the proof of \cite[Theorem 3.5]{burgerdifrancescofranek} that if $\varepsilon=1$ (the critical case for $m=2$), then no $\rho\in L^2(\mathbb{R}^N)\cap\mathcal{P}(\mathbb{R}^N)$ satisfying $\varepsilon\rho(x)=(G\ast\rho)(x)$ in $\mathbb{R}^N$ exists. 

For $m\neq 2$, we cannot exclude the existence of a non-compact stationary solution $\rho\in L^m(\mathbb{R}^N)\cap\mathcal{P}(\mathbb{R}^N)$ of \eqref{eq1b} via the Fourier approach.
\end{rmk}

\begin{thm}
\label{negenergy}
Let $G$ satisfy \ref{(G1)}-\ref{(G4)}. Every stationary solution $\rho\in L^m(\mathbb{R}^N)\cap\mathcal{P}(\mathbb{R}^N)$ of \eqref{eq1b} (compact or non-compact) has negative energy if $m\geq 2$ and $\varepsilon>0$.
\end{thm}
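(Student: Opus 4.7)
The plan is to read off the energy directly from Corollary \ref{propcompandnoncompsol} and split into the compact and non-compact cases. In both expressions for $E[\rho]$ the sign of the prefactor $\frac{1}{m}-\frac{1}{2}$ is controlled by the assumption $m\geq 2$: it is non-positive, and strictly negative for $m>2$. Since $G\geq 0$ with $\mathrm{supp}\,G=\mathbb{R}^N$ by \ref{(G1)}, \ref{(G3)}, \ref{(G4)} and $\rho$ is a nontrivial probability density, the convolution $G\ast\rho$ is strictly positive everywhere, so $\int_{\mathbb{R}^N}(G\ast\rho)\rho\,dx>0$ and $(G\ast\rho)(R)>0$ for any finite $R$.

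In the compact case, Corollary \ref{propcompandnoncompsol} gives
\begin{equation*}
E[\rho]=\Big(\frac{1}{m}-\frac{1}{2}\Big)\int_{\mathbb{R}^N}(G\ast\rho)(x)\rho(x)\,dx-\frac{1}{m}(G\ast\rho)(R).
\end{equation*}
For $m\geq 2$ the first term is non-positive and the second term is strictly negative, so $E[\rho]<0$. In the non-compact case, Corollary \ref{propcompandnoncompsol} gives
\begin{equation*}
E[\rho]=\Big(\frac{1}{m}-\frac{1}{2}\Big)\int_{\mathbb{R}^N}(G\ast\rho)(x)\rho(x)\,dx,
\end{equation*}
which is strictly negative as soon as $m>2$.

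The only delicate case is $m=2$ with non-compact support, where the prefactor vanishes and the formula only yields $E[\rho]=0$. This case is handled by Remark \ref{mequals2nononcompact}: integrating the stationary equation $\varepsilon\rho=G\ast\rho$ forces $\varepsilon=1$, and the Fourier argument recalled there from \cite{burgerdifrancescofranek} rules out the existence of such a $\rho\in L^2(\mathbb{R}^N)\cap\mathcal{P}(\mathbb{R}^N)$. Hence the case does not actually occur and the conclusion holds vacuously there.

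I do not anticipate a real obstacle: the whole statement reduces to inspecting the sign in the closed-form energy identities of Corollary \ref{propcompandnoncompsol}, combined with the strict positivity of $G\ast\rho$ coming from \ref{(G1)} and \ref{(G3)}, and invoking Remark \ref{mequals2nononcompact} to dispose of the degenerate borderline case $m=2$ with non-compact support.
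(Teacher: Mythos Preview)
Your proof is correct and follows essentially the same approach as the paper: both arguments split into the compact and non-compact cases, read off the sign of $E[\rho]$ from the energy identities in Corollary \ref{propcompandnoncompsol}, and invoke Remark \ref{mequals2nononcompact} to exclude the non-compact case when $m=2$. Your version is simply more explicit about why $(G\ast\rho)(R)>0$ and $\int(G\ast\rho)\rho>0$, which the paper leaves implicit.
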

\begin{proof}
By Corollary \ref{propcompandnoncompsol}, for $m\geq2$ no compact stationary solution $\rho\in L^m(\mathbb{R}^N)\cap\mathcal{P}(\mathbb{R}^N)$ of \eqref{eq1b} with non-negative energy exists. Furthermore, for $m>2$ no non-compact stationary solution $\rho\in L^m(\mathbb{R}^N)\cap\mathcal{P}(\mathbb{R}^N)$ of \eqref{eq1b} with non-negative energy exists due to Corollary \ref{propcompandnoncompsol} and the existence of a non-compact stationary solution of \eqref{eq1b} for $m=2$ is excluded by Remark \ref{mequals2nononcompact}. 
\end{proof}

\begin{rmk}
\label{stationarysolutionswithpositiveenergy}
By Corollary \ref{propcompandnoncompsol}, we directly see that for $1<m<2$ and $\varepsilon>0$ a non-compact stationary solution $\rho\in L^m(\mathbb{R}^N)\cap\mathcal{P}(\mathbb{R}^N)$ of \eqref{eq1b}, if it exists, has positive energy. Moreover, for $1<m<2$ and $\varepsilon>0$ the compactness condition from Lemma \ref{condforcompsupp} does not exclude the existence of compact stationary solutions of \eqref{eq1b} with positive energy in contrast to $m\geq 2$.
\end{rmk}

\begin{thm}
\label{globmincomp}
Let $G$ satisfy \ref{(G1)}-\ref{(G4)}, $m>1$ and $\varepsilon>0$. If a global minimizer of the energy $E$ in $L^m(\mathbb{R}^N)\cap\mathcal{P}(\mathbb{R}^N)$ exists, it is compactly supported.
\end{thm}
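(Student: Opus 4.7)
My plan is to apply Lemma \ref{condforcompsupp} to the global minimizer $\rho$. By Lemma \ref{resultsfromburgerdifrancescofranek}\ref{results(iv)}--\ref{results(v)} any such $\rho$ is a stationary solution of \eqref{eq1b}, hence continuous (Lemma \ref{continuityofstationarysolution}) and radially symmetric, monotonically decreasing, with connected support (Theorem \ref{symmetryofstationarysolutions}). By Lemma \ref{resultsfromburgerdifrancescofranek}\ref{results(vi)} the quantity $C\coloneqq\varepsilon\rho^{m-1}(x)-(G\ast\rho)(x)$ is independent of $x\in\mathrm{supp}\,\rho$, and by Remark \ref{differentcompcond} it is enough to prove $C<0$ strictly. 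If $\mathrm{supp}\,\rho$ were already compact the boundary value would give $C=-(G\ast\rho)(R)<0$ by continuity of $\rho$, so I proceed by contradiction and assume $\mathrm{supp}\,\rho=\mathbb{R}^N$; Corollary \ref{propcompandnoncompsol} then yields $\varepsilon\rho^{m-1}=G\ast\rho$ on $\mathbb{R}^N$, $C=0$, and $E[\rho]=\varepsilon(1/m-1/2)\|\rho\|_{L^m}^m$. The task reduces to contradicting the minimality of such a $\rho$.

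The case $m=2$ is immediate from Remark \ref{mequals2nononcompact}. For $1<m<2$ one has $E[\rho]>0$, and I would use the test function $\eta_R\coloneqq\chi_{B_R(0)}/|B_R(0)|\in\mathcal{P}(\mathbb{R}^N)$; a direct computation gives
\[
E[\eta_R]=\frac{\varepsilon}{m}|B_R(0)|^{1-m}-\frac{1}{2|B_R(0)|^2}\int_{B_R(0)}\int_{B_R(0)}G(x-y)\,dx\,dy,
\]
a positive term of order $R^{-N(m-1)}$ minus a non-negative one of order $R^{-N}$. Since $1<m<2$ forces $N(m-1)<N$, the first term dominates and $E[\eta_R]\to 0^+$ as $R\to\infty$, so $E[\eta_R]<E[\rho]$ for $R$ large and minimality is violated.

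The genuinely delicate case is $m>2$, where $E[\rho]<0$ and the simple test function above is too weak. Here I would exploit that $\rho>0$ on all of $\mathbb{R}^N$ (radial monotonicity together with $\mathrm{supp}\,\rho=\mathbb{R}^N$) and that $\varepsilon\rho^{m-1}=G\ast\rho$ makes the first variation of $E$ at $\rho$ vanish on every admissible mean-zero $\phi$, so that for $\rho$ to be a local minimizer the quadratic form
\[
Q(\phi)\coloneqq\frac{\varepsilon(m-1)}{2}\int_{\mathbb{R}^N}\rho^{m-2}\phi^2\,dx-\frac{1}{2}\int_{\mathbb{R}^N}(G\ast\phi)\phi\,dx
\]
must be non-negative. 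Fix $\psi\in C^{\infty}_c(B_1(0))$ with $\psi\geq 0$ and $\psi\not\equiv 0$, and take the dipole $\phi_a(x)\coloneqq\psi(x-a)-\psi(x+a)$ for $|a|>2$. Then $\int\phi_a\,dx=0$, and since $\rho(|a|+1)>0$ one checks that $\rho+\epsilon\phi_a\geq 0$ for $\epsilon$ sufficiently small. Using the radial monotonicity of $\rho$ and the disjointness of the two bumps,
\[
\int_{\mathbb{R}^N}\rho^{m-2}\phi_a^2\,dx\leq 2\rho^{m-2}(|a|-1)\|\psi\|_{L^2}^2,
\]
which tends to $0$ as $|a|\to\infty$ precisely because $m-2>0$. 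Expanding $\int(G\ast\phi_a)\phi_a\,dx$ produces two self-interactions, each equal to $\int(G\ast\psi)\psi\,dx>0$, plus cross terms in which $G$ is evaluated at translates of magnitude $2|a|$; the latter vanish by \ref{(G4)}. Hence $Q(\phi_a)\to-\int(G\ast\psi)\psi\,dx<0$ as $|a|\to\infty$, and choosing $|a|$ large together with $\epsilon$ small enough (so that the cubic remainder in $\epsilon$ stays below the quadratic term) yields a perturbation strictly decreasing $E$, contradicting global minimality. This $m>2$ step is the main obstacle: the case cannot be handled by a compact test-function comparison, and one is forced into the second-order analysis whose crucial input is that the weight $\rho^{m-2}$ vanishes at infinity, which is exactly where $m>2$ enters.
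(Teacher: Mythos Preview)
Your argument is correct, and for $1<m\le 2$ it is essentially the paper's: both reduce to the observation that $\inf E\le 0$ (you exhibit this via the spread-out indicators $\eta_R$), whence a global minimizer has nonpositive energy and the compactness criterion of Lemma~\ref{condforcompsupp}/Remark~\ref{differentcompcond} forces compact support; the case $m=2$ goes through Remark~\ref{mequals2nononcompact} in both.

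For $m>2$ the two proofs diverge. The paper does \emph{not} use any second-variation analysis. Instead it argues pointwise: from radial monotonicity and $\|\rho\|_{L^1}=1$ one gets $\rho(x)\le C|x|^{-N}$, while positivity and radial symmetry of $G$ give the lower bound $(G\ast\rho)(x)\ge C'\rho(x)$ for $|x|>1$ (by minorizing the convolution by $\rho(x)$ times the integral of $G$ over a fixed unit ball near the origin). Hence
\[
\varepsilon\rho^{m-1}(x)-(G\ast\rho)(x)\le \rho(x)\bigl(\varepsilon C^{m-2}|x|^{-N(m-2)}-C'\bigr),
\]
which is strictly negative for $|x|$ large because $m>2$, contradicting $\varepsilon\rho^{m-1}=G\ast\rho$. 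This is shorter and more elementary than your dipole perturbation, and it in fact yields the stronger statement that \emph{no} non-compact stationary solution exists for $m>2$ (only radial monotonicity and the identity $\varepsilon\rho^{m-1}=G\ast\rho$ are used), not merely that a non-compact minimizer is excluded. Your second-variation route, by contrast, genuinely uses minimality; it is a nice and robust technique, but here it is heavier than necessary.
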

\begin{proof}
For $1<m\leq 2$, a global minimizer, if it exists (cf.\ Section \ref{globalminimizers}), has non-positive energy such that, due to Lemma \ref{resultsfromburgerdifrancescofranek}\ref{results(iv)} and \ref{results(vi)}, it applies Lemma \ref{condforcompsupp}.

Therefore, let $m>2$ and assume that $\rho$ is a non-compact global minimizer, i.e.\ 
\begin{equation*}
\varepsilon\rho^{m-1}(x)=(G\ast\rho)(x).
\end{equation*}
Since $\rho$ is radially symmetric and monotonically decreasing by Riesz symmetric decreasing rearrangement inequality and since we have $\int_{\mathbb{R}^N}\rho(x)\,dx=1$, it holds that $\rho(x)|B_{|x|}(0)|\leq1$ such that we conclude that $\rho(x)\leq C|x|^{-N}$. Moreover, using that $G$ is positive and radially symmetric by conditions \ref{(G1)} and \ref{(G3)}, for $|x|>1$ we estimate that
\begin{equation*}
\begin{aligned}
(G\ast\rho)(x)&=\int_{\mathbb{R}^N}G(x-y)\rho(y)\,dy\geq\rho(x)\int_{B_{|x|}(0)}G(x-y)\,dy\\
& \geq\rho(x)\int_{B_1(x-\frac{x}{|x|})}G(x-y)\,dy=\rho(x)\int_{B_1(0)}G\Big(\frac{x}{|x|}-y\Big)\,dy=C^\prime\rho(x)
\end{aligned}
\end{equation*}
with $C^\prime$ dependent on $G$ but independent from $x$ because of the radial symmetry of $G$. Therefore, we have
\begin{equation*}
\varepsilon\rho^{m-1}(x)-(G\ast\rho)(x)\leq\rho(x)(\varepsilon C^{m-2}|x|^{-N(m-2)}-C^\prime)
\end{equation*}
which contradicts for large $|x|>1$ the assumption that $\rho$ is a non-compact stationary solution of (1) since $m>2$.
\end{proof}

\section{Existence of global minimizers}
\label{globalminimizers}
In the following, we prove the existence of a global minimizer of the energy
\begin{equation*}
E[\rho]=\int_{\mathbb{R}^N}\frac{\varepsilon}{m}\rho^m(x)\,dx-\frac{1}{2}\int_{\mathbb{R}^N}\int_{\mathbb{R}^N}G(x-y)\rho(y)\rho(x)\,dydx
\end{equation*}
in $L^m(\mathbb{R}^N)\cap\mathcal{P}(\mathbb{R}^N)$ without using the concentration-compactness principle \cite{lions} if the interaction potential $G$ satisfies \ref{(G1)}-\ref{(G4)}. For $m\geq2$, we obtain the same result as in \cite{bedrossian} and for $1<m<2$ we determine coefficients $\varepsilon>0$ which allow the existence of a global minimizer.

Using the concentration-compactness principle it is proved in \cite[Theorem II.1]{lions} that a global minimizer of $E$ exists if and only if the strict subadditivity condition
\begin{equation}
\inf_{\rho\in L^m\cap\mathcal{P}}E[\rho]<\inf_{\rho\in L^m\cap\mathcal{P}^M}E[\rho]+\inf_{\rho\in L^m\cap\mathcal{P}^{(1-M)}}E[\rho]
\end{equation}
is satisfied for all $M\in(0,1)$.

Moreover, in \cite{lions} the interaction potential does not need to be necessarily radially symmetric and monotonically decreasing. However, verifying that the strict subadditivity condition is satisfied may not be straightforward. In \cite[Corollary II.1]{lions} this condition is shown to be satisfied if either $1<m\leq2$ and $\inf_{\rho\in L^m\cap\mathcal{P}}E[\rho]<0$ or $G(\tau x)\geq\tau^{-k}G(x)$ for all $\tau\geq1$ and almost every $x\in\mathbb{R}^N$ and $\inf_{\rho\in L^m\cap\mathcal{P}}E[\rho]<0$.

If the interaction potential $G$ is non-negative, radially symmetric and monotonically decreasing, it is proved in \cite{bedrossian} that a global minimizer of the energy $E$ in $L^m(\mathbb{R}^N)\cap\mathcal{P}(\mathbb{R}^N)$ exists for any coefficient $\varepsilon>0$ if $m>2$ and for any $0<\varepsilon<1$ if $m=2$. Moreover, it is shown that a global minimizer is radially symmetric and monotonically decreasing if the interaction potential $G$ is radially symmetric and strictly monotonically decreasing.

The strategy of the proof is to use a suitable scaling argument to observe that, under the above assumptions of $\varepsilon$ and $m$, for every $M>0$ a function $\rho\in C_0^\infty(\mathbb{R}^N)\cap L^m(\mathbb{R}^N)\cap\mathcal{P}^{M}(\mathbb{R}^N)$ with $E[\rho]<0$ exists. Together with Riesz symmetric decreasing rearrangement inequality, this statement is used to show that for $M_1>M_2$ the relation
\begin{equation}
\label{massminrelation}
\inf_{\rho\in L^m\cap\mathcal{P}^{M_1}}E[\rho]<\inf_{\rho\in L^m\cap\mathcal{P}^{M_2}}E[\rho].
\end{equation}
is satisfied. This conclusion is used to derive the existence of a radially symmetric and monotonically decreasing global minimizer using Riesz symmetric decreasing rearrangement inequality and following the approach of the concentration-compactness principle of Lions \cite{lions}. More precisely, the inequality \eqref{massminrelation} is needed to rule out that dichotomy occurs.

We prove the existence of global minimizers of $E$ with a potential satisfying \ref{(G1)}-\ref{(G4)} using some ideas in \cite{canizocarrillopatacchini} instead of the concentration-compactness principle \cite{lions}. In \cite{canizocarrillopatacchini}, the existence of global minimizers is investigated in a slightly different setting. More precisely, an interaction energy functional
\begin{equation*}
F[\mu]=\frac{1}{2}\int_{\mathbb{R}^N}\int_{\mathbb{R}^N}W(x-y)\,d\mu(y)d\mu(x)
\end{equation*}
with $\mu$ being a probability measure and $W$ an appropriate interaction potential is analysed. It is shown that if considering the problem on a given ball, then the diameter of the support of a global minimizer of $F$ is bounded independently of the size of the given ball. This observation is used to conclude that a global minimizer of the problem in the whole space exists. A similar approach can also be found in \cite{auchmutybeals}.

Moreover, this strategy allows to consider the case $\inf_{\rho\in L^m\cap\mathcal{P}}E[\rho]=0$ for $1<m<2$ which is not covered by the approach in \cite{lions}. We will obtain in this section the following result:
\begin{thm}
\label{existenceofglobalmin}
Let $G$ satisfy \ref{(G1)}-\ref{(G4)}. Let one of the following conditions be satisfied:
\begin{itemize}
\item[(i)] $1<m<2$ and $0<\varepsilon\leq\varepsilon_0\coloneqq\sup_{\rho\in L^m\cap\mathcal{P}}\frac{m}{2}\|\rho\|_{L^m}^{-m}\int_{\mathbb{R}^N}(G\ast\rho)(x)\rho(x)\,dx$
\item[(ii)] $m=2$ and $0<\varepsilon<1$
\item[(iii)] $m>2$ and $\varepsilon>0$
\end{itemize}
Then, there exists a global minimizer of the energy $E$ in $L^m(\mathbb{R}^N)\cap\mathcal{P}(\mathbb{R}^N)$ which is radially symmetric and monotonically decreasing.

If $1<m\leq2$ and the corresponding condition (i) or (ii) for the coefficient $\varepsilon$ is not satisfied, then there exists no global minimizer of the energy $E$ in $L^m(\mathbb{R}^N)\cap\mathcal{P}(\mathbb{R}^N)$.
\end{thm}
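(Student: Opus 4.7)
The plan is to adapt the strategy of \cite{canizocarrillopatacchini}: first minimize $E$ over the restricted class of probability densities supported in a closed ball $\overline{B_R(0)}$, next prove that the restricted minimizer $\rho_R$ has support contained in a fixed ball $\overline{B_{R_0}(0)}$ independent of $R$, and finally pass to the limit $R\to\infty$ by compactness. On the ball the direct method applies because $E\geq-\tfrac{1}{2}\|G\|_{L^\infty}$ on $\mathcal{P}$, so any minimizing sequence $\{\rho_n\}\subset\mathcal{P}_R\cap L^m$ is uniformly bounded in $L^m$. The $L^m$-norm is weakly lower semicontinuous, and the non-local bilinear term is continuous along the sequence since $G\ast\rho_n\to G\ast\rho_R$ uniformly on $\overline{B_R}$ by Arzel\`a-Ascoli (using the uniform continuity of $G$ provided by \ref{(G2)}, \ref{(G4)} and $\|\rho_n\|_{L^1}=1$) while $\rho_n\rightharpoonup\rho_R$ in $L^1(\overline{B_R})$. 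Mass is preserved by testing against $1\in L^{m/(m-1)}(\overline{B_R})$, and symmetric decreasing rearrangement stays inside $\overline{B_R}$ and can only decrease $E$, so $\rho_R$ may be taken radial and monotonically decreasing. Minimality on the interior of $B_R$ yields the Lagrange identity $\varepsilon\rho_R^{m-1}-G\ast\rho_R=\mu_R$ on $\mathrm{supp}\,\rho_R$, and integrating against $\rho_R$ gives
\begin{equation*}
\mu_R=2E[\rho_R]+\varepsilon\Bigl(1-\tfrac{2}{m}\Bigr)\|\rho_R\|_{L^m}^m.
\end{equation*}

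The crux is to exhibit a constant $K<0$ with $\mu_R\leq K$ for all sufficiently large $R$, since then the concentration argument from the proof of Lemma~\ref{condforcompsupp} applies verbatim to $\rho_R$ and produces a uniform lower bound $\int_{B_r(z)}\rho_R\geq C>0$ at each $z\in\mathrm{supp}\,\rho_R$; a maximal disjoint packing of such balls together with $\|\rho_R\|_{L^1}=1$ then confines $\mathrm{supp}\,\rho_R$ to a fixed ball $\overline{B_{R_0}(0)}$ independent of $R$. To secure this bound one exhibits under each of (i)--(iii) a compactly supported $\rho_\ast\in\mathcal{P}\cap L^m$ with $E[\rho_\ast]<0$: in (iii) by the scaling $\rho_\lambda(x)=\lambda^N\rho(\lambda x)$, comparing the $\lambda^{N(m-1)}$ behaviour of the local term with the $\lambda^N$ behaviour of the non-local term as $\lambda\to 0$; in (ii) by the Fourier identity $E[\rho]=\tfrac{1}{2}\int(\varepsilon-\hat G(\xi))|\hat\rho(\xi)|^2\,d\xi$ combined with $\hat G(0)=1>\varepsilon$; in (i) with $\varepsilon<\varepsilon_0$ directly from the definition of $\varepsilon_0$. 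Once $E[\rho_R]\leq E[\rho_\ast]<0$ for large $R$, the case $1<m\leq 2$ gives $\mu_R\leq 2E[\rho_R]\leq 2E[\rho_\ast]\eqqcolon K<0$ directly since $1-\tfrac{2}{m}\leq 0$. The main obstacle is the case $m>2$, where the sign of $1-\tfrac{2}{m}$ is unfavourable: here the identity for $\mu_R$ has to be supplemented by the a priori bound $\|\rho_R\|_{L^m}^m\leq\tfrac{m}{\varepsilon}(E[\rho_\ast]+\tfrac{1}{2}\|G\|_{L^\infty})$ together with a refined choice of $\rho_\ast$ (obtained by optimizing the scaling $\rho_\lambda$ in $\lambda$) so that $E[\rho_\ast]$ is negative enough to still force $\mu_R\leq K<0$. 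The boundary case $\varepsilon=\varepsilon_0$ in (i) is reached by approximation from $\varepsilon<\varepsilon_0$ together with the compactness already developed.

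Once $\mathrm{supp}\,\rho_R\subset\overline{B_{R_0}(0)}$ for $R\geq R_0$, the family $\{\rho_R\}$ is relatively compact in the weak $L^m$-topology and any weak limit $\rho_\infty$ lies in $\mathcal{P}_{R_0}$ with $E[\rho_\infty]\leq\liminf_R E[\rho_R]$. Truncating and renormalizing any $\rho\in\mathcal{P}\cap L^m$ inside $\overline{B_S(0)}$ and letting $S\to\infty$ shows that $\inf_{\mathcal{P}_R\cap L^m}E\to\inf_{\mathcal{P}\cap L^m}E$, so $\rho_\infty$ is a global minimizer, radial and monotonically decreasing as a weak limit of such functions. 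For the non-existence statement in the remaining regimes of (i) and (ii), the definition of $\varepsilon_0$ (respectively the Fourier identity together with the strict monotonicity of $\hat G$ away from the origin) shows that $E[\rho]>0$ for every $\rho\in\mathcal{P}\cap L^m$, whereas the scaling $\rho_\lambda$ with $\lambda\to 0$ makes both terms of $E[\rho_\lambda]$ vanish, so $\inf E=0$ is not attained.
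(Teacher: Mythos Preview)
Your overall architecture---minimize on $\mathcal{P}_R$, derive the Euler--Lagrange identity, obtain a uniform negative bound on $\mu_R$, apply the mass-concentration argument of Lemma~\ref{condforcompsupp}, and pass to the limit---matches the paper's strategy, and for $1<m\le2$ with $\varepsilon<\varepsilon_0$ (resp.\ $\varepsilon<1$) your argument is essentially the paper's. However, there are two genuine gaps.

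\textbf{The case $m>2$.} Your proposed bound combines $\mu_R=2E[\rho_R]+\varepsilon(1-\tfrac{2}{m})\|\rho_R\|_{L^m}^m$ with $\|\rho_R\|_{L^m}^m\le\tfrac{m}{\varepsilon}(E[\rho_\ast]+\tfrac{1}{2}G(0))$ to get $\mu_R\le mE[\rho_\ast]+\tfrac{m-2}{2}G(0)$, and you claim that optimizing the scaling $\rho_\lambda$ makes $E[\rho_\ast]$ negative enough to force this below zero. This fails for large $\varepsilon$: under $\rho_\lambda(x)=\lambda^N\rho(\lambda x)$ with $\lambda\to0$ one has $E[\rho_\lambda]\approx\tfrac{\varepsilon}{m}\lambda^{N(m-1)}\|\rho\|_{L^m}^m-\tfrac{1}{2}\lambda^N\|\rho\|_{L^2}^2$, and minimizing over $\lambda$ gives $E[\rho_{\lambda_\ast}]\sim -c\,\varepsilon^{-1/(m-2)}$, which tends to $0$ as $\varepsilon\to\infty$ and cannot beat the fixed threshold $-\tfrac{m-2}{2m}G(0)$. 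In fact $\inf_{\mathcal{P}\cap L^m}E\to0$ as $\varepsilon\to\infty$, so no choice of $\rho_\ast$ can rescue this estimate. The paper handles $m>2$ by a completely different route: it uses the \emph{pointwise} lower bound $(G\ast\rho_R)(x)\ge C'\rho_R(x)$ for $|x|>1$ (from the proof of Theorem~\ref{globmincomp}), together with $\rho_R(x)\le C|x|^{-N}$, to get $\mu_R\le\rho_R(x)\bigl(\varepsilon C^{m-2}|x|^{-N(m-2)}-C'\bigr)$ at a suitably chosen $x$, and then argues by contradiction (invoking the mass-monotonicity inequality~\eqref{massminrelation}) that $\rho_R(x)$ stays bounded below uniformly in $R$.

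\textbf{The endpoint $\varepsilon=\varepsilon_0$.} Your approximation from $\varepsilon<\varepsilon_0$ also has a gap: the constant $K=2E_\varepsilon[\rho_\ast]$ driving the support bound tends to $0$ as $\varepsilon\nearrow\varepsilon_0$, so the diameter bound $R_0$ degenerates and you cannot pass to the limit. The paper instead proves directly (Lemma following Lemma~\ref{noexistenceofglobminformsmaller2}) that the supremum defining $\varepsilon_0$ is \emph{attained} by a compactly supported $\rho_0$, using a uniform lower bound $\|\rho_{0,R}\|_{L^m}\ge\delta>0$ along the maximizing sequence and the choice $K=-\varepsilon_{0,R'}(\tfrac{2}{m}-1)\delta^m$; this immediately gives a minimizer with $E[\rho_0]=0$ at $\varepsilon=\varepsilon_0$.
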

First, let us remark that a global minimizer of $E$ in $L^m(\mathbb{R}^N)\cap\mathcal{P}(\mathbb{R}^N)$ cannot have positive energy. So, if we want to obtain a global minimizer for $1<m<2$, we have to assume at least that $\varepsilon$ is chosen such that a function $\rho\in L^m(\mathbb{R}^N)\cap\mathcal{P}(\mathbb{R}^N)$ with $E[\rho]\leq0$ exists.

If we assume that a function $\rho\in L^m(\mathbb{R}^N)\cap\mathcal{P}(\mathbb{R}^N)$ with $E[\rho]=0$ exists but none with $E[\rho]<0$, then $\rho$ is already a global minimizer. Therefore, it is enough to consider the case that $E[\rho]<0$ for some $\rho\in L^m(\mathbb{R}^N)\cap\mathcal{P}(\mathbb{R}^N)$. Following an analogous idea given in \cite[Lemma 2.5]{canizocarrillopatacchini}, we define
\begin{equation*}
\rho_n\coloneqq\frac{1}{\int_{B_n(0)}\rho(x)\,dx}\chi_{B_n(0)}\rho.
\end{equation*}
By Lebesgue's monotone convergence theorem, we obtain
\begin{equation*}
E[\rho_n]\rightarrow E[\rho]\quad\mathrm{for}\quad n\rightarrow\infty
\end{equation*}
such that a function $\hat{\rho}\in L^m(\mathbb{R}^N)\cap\mathcal{P}_S(\mathbb{R}^N)$ with $E[\hat{\rho}]<0$ exists for some large enough but finite constant $S$. In particular, there exists a $\hat{\rho}\in L^m(\mathbb{R}^N)\cap\mathcal{P}_R(\mathbb{R}^N)$ with $E[\hat{\rho}]<0$ for all $R\geq S$. In the following, we always denote by $S$ the finite constant we just obtained.
\begin{lem}
\label{globminrestricteddomain}
Let $m>1$, $\varepsilon>0$ and let $G$ satisfy \ref{(G1)}-\ref{(G4)}. Then, there exists a global minimizer of $E$ in $L^m(\mathbb{R}^N)\cap\mathcal{P}_R(\mathbb{R}^N)$ for every $R>0$.
\end{lem}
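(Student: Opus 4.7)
The plan is to apply the direct method of the calculus of variations on the bounded domain $\overline{B_R(0)}$, where the boundedness of $G$ and the finiteness of the domain make compactness arguments routine. First, I would observe that $E$ is bounded below on $L^m(\mathbb{R}^N)\cap\mathcal{P}_R(\mathbb{R}^N)$: since $G\in L^\infty(\mathbb{R}^N)$ and $\|\rho\|_{L^1}=1$, one has
\begin{equation*}
E[\rho]\geq-\frac{1}{2}\|G\|_{L^\infty}\|\rho\|_{L^1}^2=-\frac{1}{2}\|G\|_{L^\infty},
\end{equation*}
so $I_R\coloneqq\inf_{L^m\cap\mathcal{P}_R}E>-\infty$. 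Pick a minimizing sequence $(\rho_n)\subset L^m\cap\mathcal{P}_R$. From $E[\rho_n]\to I_R$ together with the same bound on the interaction term one gets
\begin{equation*}
\tfrac{\varepsilon}{m}\|\rho_n\|_{L^m}^m\leq E[\rho_n]+\tfrac{1}{2}\|G\|_{L^\infty},
\end{equation*}
so $(\rho_n)$ is bounded in $L^m(\mathbb{R}^N)$.

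Since $m>1$, $L^m(\mathbb{R}^N)$ is reflexive and I can extract a subsequence (not relabelled) with $\rho_n\rightharpoonup\rho$ weakly in $L^m(\mathbb{R}^N)$. The set of nonnegative $L^m$-functions supported in $\overline{B_R(0)}$ is convex and strongly closed, hence weakly closed, so $\rho\geq0$ a.e.\ and $\mathrm{supp}\,\rho\subset\overline{B_R(0)}$. Testing the weak convergence against $\chi_{B_R(0)}\in L^{m'}(\mathbb{R}^N)$ (admissible because $B_R(0)$ has finite measure) gives $\int\rho\,dx=\lim\int\rho_n\,dx=1$, so $\rho\in L^m\cap\mathcal{P}_R$.

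It remains to show $E[\rho]\leq\liminf_n E[\rho_n]=I_R$. The diffusion term $\rho\mapsto\|\rho\|_{L^m}^m$ is convex and thus weakly lower semicontinuous. For the interaction term I would argue that $G\ast\rho_n\to G\ast\rho$ strongly in $L^{m'}(B_R(0))$: for every fixed $x$, $G(x-\cdot)\in L^\infty\cap L^1$ and, on $\overline{B_R(0)}$, lies in $L^{m'}$, so weak convergence of $\rho_n$ yields pointwise convergence $(G\ast\rho_n)(x)\to(G\ast\rho)(x)$; these functions are uniformly bounded by $\|G\|_{L^\infty}$, and since $B_R(0)$ has finite measure, dominated convergence on $B_R(0)$ gives strong $L^{m'}(B_R(0))$-convergence. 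Combining strong convergence of $G\ast\rho_n$ in $L^{m'}(B_R(0))$ with weak convergence of $\rho_n$ in $L^m(B_R(0))$ yields
\begin{equation*}
\int_{B_R(0)}(G\ast\rho_n)\,\rho_n\,dx\longrightarrow\int_{B_R(0)}(G\ast\rho)\,\rho\,dx,
\end{equation*}
and since all densities are supported in $\overline{B_R(0)}$ these integrals coincide with the corresponding $\mathbb{R}^N$-integrals. Putting both terms together, $E[\rho]\leq\liminf E[\rho_n]=I_R$, so $\rho$ is a global minimizer.

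The only step that requires a moment's thought is the passage to the limit in the bilinear interaction term; this is where the restriction to the ball $\overline{B_R(0)}$ is essential, because the combination of boundedness of $G$ and finiteness of the domain is what lets dominated convergence upgrade the pointwise convergence of $G\ast\rho_n$ to strong $L^{m'}$-convergence. This argument would fail on $\mathbb{R}^N$ (mass could escape to infinity), which is precisely why the global existence of minimizers in Theorem \ref{existenceofglobalmin} must be obtained later by a separate compactness argument controlling the diameter of the support uniformly in $R$.
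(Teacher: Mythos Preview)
Your proof is correct and follows the same direct-method approach as the paper: bound $E$ from below using $G\in L^\infty$, extract a weakly convergent subsequence bounded in $L^m$, show the weak limit stays in $L^m\cap\mathcal{P}_R$ by convexity and closedness, and conclude by weak lower semicontinuity of $E$. The only difference is that the paper dismisses the lower semicontinuity as ``obvious'' whereas you spell out a clean argument for the interaction term (pointwise convergence of $G\ast\rho_n$ plus dominated convergence on the ball, then weak--strong pairing), which in fact shows that term converges rather than merely being upper semicontinuous.
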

\begin{proof}
Since $G$ is bounded from above, we have $\inf_{\rho\in L^m\cap\mathcal{P}_R}E[\rho]>-\infty$. Let $(\rho_n)_{n\in\mathbb{N}}$ denote a minimizing sequence for $E$ in $L^m(\mathbb{R}^N)\cap\mathcal{P}_R(\mathbb{R}^N)$. Since $G$ is bounded from above and $(\rho_n)_{n\in\mathbb{N}}$ a minimizing sequence, it holds that $(\rho_n)_{n\in\mathbb{N}}$ is bounded in $L^m(\mathbb{R}^N)$. So, there exists a subsequence $(\rho_{n_k})_{k\in\mathbb{N}}$ such that $\rho_{n_k}$ converges weakly in $L^m(\mathbb{R}^N)$ to some $\rho_R\in L^m(\mathbb{R}^N)$. Further, $L^m(\mathbb{R}^N)\cap\mathcal{P}_R(\mathbb{R}^N)$ is convex and closed in $L^m(\mathbb{R}^N)$ since $L^m(\overline{B_R(0)})$ is closed in $L^m(\mathbb{R}^N)$ and since the embedding of $L^m(\overline{B_R(0)})$ in $L^1(\overline{B_R(0)})$ as well as the mass constraint is continuous in $L^1(\overline{B_R(0)})$. Therefore, $L^m({\mathbb{R}^N})\cap\mathcal{P}_R(\mathbb{R}^N)$ is weakly closed in $L^m(\mathbb{R}^N)$ and we have $\rho_R\in L^m(\mathbb{R}^N)\cap\mathcal{P}_R(\mathbb{R}^N)$. Obviously, $E$ is weakly lower semi-continuous in $L^m(\mathbb{R}^N)$ such that $E[\rho_R]=\inf_{\rho\in L^m\cap\mathcal{P}_R}E[\rho]$.
\end{proof}
Performing similar calculations as in \cite[Lemma 2.3]{canizocarrillopatacchini}, we get the following result for a global minimizer of the energy in the restricted setting.
\begin{lem}
Let $m>1$, $\varepsilon>0$ and let $G$ satisfy \ref{(G1)}-\ref{(G4)}. Let $\rho_R$ be a global minimizer of $E$ in $L^m(\mathbb{R}^N)\cap\mathcal{P}_R(\mathbb{R}^N)$, then we have
\begin{equation}
\label{equationforglobalminimizer}
\varepsilon\rho_R^{m-1}(x)-(G\ast\rho_R)(x)=2E[\rho_R]-\int_{\mathbb{R}^N}\varepsilon\Big(\frac{2}{m}-1\Big)\rho_R^m(y)\,dy
\end{equation}
almost everywhere in $\mathrm{supp}\,\rho_R$.
\end{lem}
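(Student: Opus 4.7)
The plan is to derive the stated identity as the Euler–Lagrange equation of the constrained minimization problem satisfied by $\rho_R$, where the two constraints are $\int_{\mathbb{R}^N}\rho_R\,dx=1$ and $\mathrm{supp}\,\rho_R\subset\overline{B_R(0)}$. First I would consider perturbations $\rho_R+t\phi$ with $\phi\in L^\infty(\mathbb{R}^N)$, $\mathrm{supp}\,\phi\subset\overline{B_R(0)}$, and $\int\phi\,dx=0$, so that the perturbed density lies in $\mathcal{P}_R(\mathbb{R}^N)$. Computing the first variation, using the symmetry of $G$ to combine the two convolution terms, yields
\begin{equation*}
\frac{d}{dt}E[\rho_R+t\phi]\Big|_{t=0}=\int_{\mathbb{R}^N}\bigl(\varepsilon\rho_R^{m-1}-G\ast\rho_R\bigr)\phi\,dx,
\end{equation*}
which must vanish by minimality whenever the perturbation is admissible in both signs of $t$.

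The main obstacle I expect is guaranteeing $\rho_R\pm t\phi\geq 0$ for small $|t|$, which is needed both for admissibility and to upgrade the resulting inequality to an equality. The standard remedy is to localize to super-level sets: for each $\delta>0$, restrict to $\phi\in L^\infty(\mathbb{R}^N)$ supported in $\{\rho_R\geq\delta\}\cap\overline{B_R(0)}$ with $\int\phi\,dx=0$. Then $\rho_R\pm t\phi\geq 0$ for $|t|$ small enough, and the vanishing of the first variation for every such mean-zero $\phi$ forces, by the standard orthogonality-to-mean-zero argument, the existence of a constant $C_\delta$ with $\varepsilon\rho_R^{m-1}-G\ast\rho_R=C_\delta$ almost everywhere on $\{\rho_R\geq\delta\}$. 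Since the sets $\{\rho_R\geq\delta\}$ are monotone in $\delta$, the constant cannot depend on $\delta$; letting $\delta\to 0^+$ I obtain $\varepsilon\rho_R^{m-1}-G\ast\rho_R\equiv C$ almost everywhere on $\{\rho_R>0\}$, and hence almost everywhere on $\mathrm{supp}\,\rho_R$.

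It then remains to identify $C$ explicitly. Multiplying the pointwise identity by $\rho_R$, integrating over $\mathbb{R}^N$, and using $\|\rho_R\|_{L^1}=1$ gives
\begin{equation*}
C=\varepsilon\int_{\mathbb{R}^N}\rho_R^m(y)\,dy-\int_{\mathbb{R}^N}(G\ast\rho_R)(y)\rho_R(y)\,dy.
\end{equation*}
Eliminating $\int(G\ast\rho_R)\rho_R$ via the definition $E[\rho_R]=\frac{\varepsilon}{m}\int\rho_R^m-\frac{1}{2}\int(G\ast\rho_R)\rho_R$ rearranges to $C=2E[\rho_R]-\varepsilon(\tfrac{2}{m}-1)\int\rho_R^m$, which is exactly the claimed expression. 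Aside from the admissibility-of-perturbations step above, the remainder reduces to the normalization and the definition of $E$, so I expect no additional obstacle.
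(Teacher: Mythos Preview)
Your proof is correct and follows the same overall strategy as the paper: compute the first variation of $E$ under mass-preserving perturbations supported in $\overline{B_R(0)}$, deduce that $\varepsilon\rho_R^{m-1}-G\ast\rho_R$ is constant on $\{\rho_R>0\}$, and then identify the constant by multiplying by $\rho_R$ and integrating.

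The only difference is in how the non-negativity constraint is handled. You restrict the test functions to the super-level sets $\{\rho_R\geq\delta\}$ and then let $\delta\to 0^+$; this is perfectly standard and works. The paper instead uses the multiplicative perturbation
\[
f(x)=\Big(\psi(x)-\int_{\mathbb{R}^N}\psi(y)\rho_R(y)\,dy\Big)\rho_R(x),\qquad \psi\in C_0^\infty(\mathbb{R}^N),
\]
so that $\rho_R+\delta f\geq(1-2\delta\|\psi\|_{L^\infty})\rho_R\geq 0$ for small $\delta$, the mass constraint is automatic, and the support stays inside $\mathrm{supp}\,\rho_R$ without any truncation. Expanding the first variation against this $f$ produces the constant $2E[\rho_R]-\varepsilon(\tfrac{2}{m}-1)\int\rho_R^m$ directly, without the separate step of identifying $C$. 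The paper's device is slicker (one computation instead of two and no $\delta\to 0$ argument), while your version is the textbook route and perhaps more transparent about where each constraint enters.
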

\begin{proof}
Let $\psi\in C_0^\infty(\mathbb{R}^N)$ and define
\begin{equation*}
f(x)\coloneqq\Big(\psi(x)-\int_{\mathbb{R}^N}\psi(y)\rho_R(y)\,dy\Big)\rho_R(x)
\end{equation*}
for all $x\in\mathbb{R}^N$ and $\rho_{\delta}\coloneqq\rho_R+\delta f$ with $\delta>0$. Then, it holds that
\begin{equation*}
\int_{\mathbb{R}^N}\rho_{\delta}(x)\,dx=\int_{\mathbb{R}^N}\rho_R(x)\,dx+\delta\int_{\mathbb{R}^N}f(x)\,dx=1.
\end{equation*}
Moreover, we have
\begin{equation*}
\psi(x)-\int_{\mathbb{R}^N}\psi(y)\rho_R(y)\,dy\geq -2\|\psi\|_{L^\infty(\mathbb{R}^N)}
\end{equation*}
such that $\rho_{\delta}(x)\geq(1-2\delta\|\psi\|_{L^{\infty}(\mathbb{R}^N)})\rho_{R}(x)$. So, for sufficiently small $\delta>0$ we have $\rho_{\delta}\in L^m(\mathbb{R}^N)\cap\mathcal{P}_R(\mathbb{R}^N)$ and since $\rho_R$ is a global minimizer in $L^m(\mathbb{R}^N)\cap\mathcal{P}_R(\mathbb{R}^N)$, it follows that $E[\rho_{\delta}]\geq E[\rho_R]$. Therefore, we obtain
\begin{equation*}
\begin{aligned}
& \frac{1}{\delta}\big(E[\rho_{\delta}]-E[\rho_R]\big)\big|_{\delta=0}\\
& \quad =\int_{\mathbb{R}^N}\Big(\varepsilon\rho_R^{m-1}(x)-(G\ast\rho_R)(x)-2E[\rho_R]\Big)\rho_R(x)\psi(x)\,dx\\
 & \quad\quad +\int_{\mathbb{R}^N}\Big(\int_{\mathbb{R}^N}\varepsilon\Big(\frac{2}{m}-1\Big)\rho_R^m(y)\,dy\Big)\rho_R(x)\psi(x)\,dx\\
& \quad \geq0.
\end{aligned}
\end{equation*}
Since this inequality holds for all $\psi\in C_0^\infty(\mathbb{R}^N)$, we conclude that 
\begin{equation*}
\varepsilon\rho_R^{m-1}(x)-(G\ast\rho_R)(x)=2E[\rho_R]-\int_{\mathbb{R}^N}\varepsilon\Big(\frac{2}{m}-1\Big)\rho_R^m(y)\,dy
\end{equation*}
holds almost everywhere in $\mathrm{supp}\,\rho_R$.
\end{proof}
Taking into consideration the proof of Lemma \ref{condforcompsupp} and the strategy in \cite{canizocarrillopatacchini}, for $R\geq S$ we obtain for each point in the support of a global minimizer $\rho_R$ a lower bound for the mass in a certain neighbourhood. The size of the neighbourhood as well as of the lower bound do not depend on $R$. This is because of $E[\rho_R]\leq E[\rho_S]<0$ for all $R\geq S$, where $\rho_R$ denotes a global minimizer of $E$ in $L^m(\mathbb{R}^N)\cap\mathcal{P}_R(\mathbb{R}^N)$ and $\rho_S$ in $L^m(\mathbb{R}^N)\cap\mathcal{P}_S(\mathbb{R}^N)$ respectively, since we can now choose
\begin{equation*}
K=2E[\rho_S]
\end{equation*}
in the proof of Lemma \ref{condforcompsupp}.

By Riesz symmetric decreasing rearrangement inequality, we know that the global minimizer $\rho_R$ is radially symmetric and monotonically decreasing, i.e.\ it has connected support. As in \cite[Lemma 2.9]{canizocarrillopatacchini}, due to our above considerations about mass concentration, we obtain an upper bound $D<+\infty$ for the diameter of the support of $\rho_R$ for all $R>0$. Using that, as in \cite[Lemma 2.10]{canizocarrillopatacchini} we prove the following statement.
\begin{lem}
\label{globalminimizerformsmaller2}
Let $G$ satisfy \ref{(G1)}-\ref{(G4)}. Let $1<m<2$ and $\varepsilon>0$ be small enough such that a function $\rho\in L^m(\mathbb{R}^N)\cap\mathcal{P}(\mathbb{R}^N)$ with $E[\rho]\leq0$ exists. Then, there exists a global minimizer of $E$ in $L^m(\mathbb{R}^N)\cap\mathcal{P}(\mathbb{R}^N)$.
\end{lem}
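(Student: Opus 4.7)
The strategy is to pass to the limit $R\to\infty$ in the restricted minimizers $\rho_R$ produced by Lemma \ref{globminrestricteddomain}, exploiting the uniform compact support furnished by the mass concentration argument sketched just before the statement.

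Since a global minimizer must have non-positive energy, it suffices (as already noted) to treat the case that some $\hat{\rho}\in L^m(\mathbb{R}^N)\cap\mathcal{P}_S(\mathbb{R}^N)$ with $E[\hat{\rho}]<0$ exists. For every $R\geq S$, Lemma \ref{globminrestricteddomain} yields a global minimizer $\rho_R$ of $E$ on $L^m\cap\mathcal{P}_R$, and since the admissible set grows with $R$ we have $E[\rho_R]\leq E[\hat{\rho}]<0$. By Riesz symmetric decreasing rearrangement (the diffusion term is rearrangement invariant while the attractive interaction term cannot decrease under symmetrization because $G$ is radial and decreasing) I may take each $\rho_R$ radially symmetric and monotonically decreasing about the origin. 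The mass concentration calculation in the proof of Lemma \ref{condforcompsupp} now applies verbatim to $\rho_R$, with the Euler--Lagrange identity \eqref{equationforglobalminimizer} playing the role of Lemma \ref{resultsfromburgerdifrancescofranek}\ref{results(vi)} and with the choice $K=2E[\hat{\rho}]<0$, producing constants $r,C>0$ independent of $R$ such that $\int_{B_r(z)}\rho_R(y)\,dy\geq C$ for every $z\in\mathrm{supp}\,\rho_R$. Together with radial monotonicity and $\|\rho_R\|_{L^1}=1$ this forces $\mathrm{supp}\,\rho_R\subset\overline{B_D(0)}$ for some $D<\infty$ independent of $R\geq S$.

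With the uniform support bound the compactness step is routine. From $\frac{\varepsilon}{m}\|\rho_R\|_{L^m}^m\leq E[\rho_R]+\frac{1}{2}\|G\|_{L^\infty}\leq E[\hat{\rho}]+\frac{1}{2}\|G\|_{L^\infty}$, the family $\{\rho_R\}_{R\geq S}$ is bounded in $L^m(\mathbb{R}^N)$, so along some sequence $R_n\to\infty$ I can extract $\rho_{R_n}\rightharpoonup\rho^\ast$ weakly in $L^m$. Since $L^m(\mathbb{R}^N)\cap\mathcal{P}_D(\mathbb{R}^N)$ is convex and closed in $L^m$, hence weakly closed, $\rho^\ast\in L^m\cap\mathcal{P}_D\subset L^m(\mathbb{R}^N)\cap\mathcal{P}(\mathbb{R}^N)$; in particular the total mass is preserved because all supports are contained in the fixed ball $\overline{B_D(0)}$.

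It remains to verify minimality. The diffusion term $\rho\mapsto\int\rho^m$ is convex and continuous on $L^m$, hence weakly lower semicontinuous. Since $G\in W^{1,\infty}(\mathbb{R}^N)$ and the supports lie in a fixed ball, $\{G\ast\rho_{R_n}\}$ is uniformly bounded and equi-Lipschitz on $\overline{B_D(0)}$, so Arzela--Ascoli gives $G\ast\rho_{R_n}\to G\ast\rho^\ast$ uniformly there; combined with the weak convergence of $\rho_{R_n}$ this yields
\begin{equation*}
\int_{\mathbb{R}^N}\!\int_{\mathbb{R}^N}G(x-y)\rho_{R_n}(y)\rho_{R_n}(x)\,dy\,dx\longrightarrow\int_{\mathbb{R}^N}\!\int_{\mathbb{R}^N}G(x-y)\rho^\ast(y)\rho^\ast(x)\,dy\,dx.
\end{equation*}
Hence $E[\rho^\ast]\leq\liminf_n E[\rho_{R_n}]$. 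Applying the truncation scheme used just before the lemma to any competitor $\rho\in L^m\cap\mathcal{P}$ shows that $\inf_{L^m\cap\mathcal{P}_{R_n}}E\to\inf_{L^m\cap\mathcal{P}}E$, which identifies $\rho^\ast$ as a global minimizer. The main obstacle is the uniform diameter bound; it relies on the strict negativity $E[\hat{\rho}]<0$ together with the quantitative mass concentration argument, exactly the obstruction that forces the smallness condition (i) on $\varepsilon$ in Theorem \ref{existenceofglobalmin}. Once this bound is in place, the remaining weak compactness and semicontinuity steps are standard.
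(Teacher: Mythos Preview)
Your proof is correct, but it takes a more elaborate route than the paper's. Both arguments rely on the same key input---the uniform diameter bound $D$ for the restricted minimizers $\rho_R$---but you then extract a weak limit of $(\rho_{R_n})$ and invoke weak lower semicontinuity plus an Arzel\`a--Ascoli argument for the interaction term. The paper bypasses this entirely: once the diameter bound is established, it simply takes $\rho_D$, the minimizer of $E$ on $L^m\cap\mathcal{P}_D$ already furnished by Lemma~\ref{globminrestricteddomain}, and observes that since every $\rho_R$ actually lies in $\mathcal{P}_D$ (after centering), one has $E[\rho_D]\leq E[\rho_R]\leq E[\rho]$ for every compactly supported competitor $\rho\in\mathcal{P}_R$; the truncation argument then handles general $\rho$. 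In effect, your weak-compactness step is reproducing the existence of $\rho_D$, which the paper already has in hand. Your approach is perfectly sound and self-contained, but the paper's is shorter because it recognizes that no limiting procedure is needed once all restricted minimizers live in a single fixed ball.
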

\begin{proof}
As argued before, we only need to consider the case $E[\rho]<0$. We can conclude analogously as in the proof for a global minimizer in \cite[Lemma 2.10]{canizocarrillopatacchini} and include the proof for the convenience of the reader. 

Let $D$ denote the upper bound of the diameter of the support of a global minimizer $\rho_R$ for all $R>0$ and let $\rho_D$ be a global minimizer of $E$ in $L^m(\mathbb{R}^N)\cap\mathcal{P}_D(\mathbb{R}^N)$. Then, we show that $\rho_D$ actually is a global minimizer of $E$ in $L^m(\mathbb{R}^N)\cap\mathcal{P}(\mathbb{R}^N)$.

First, consider a function $\rho\in L^m(\mathbb{R}^N)\cap\mathcal{P}(\mathbb{R}^N)$ with compact support, i.e.\ $\rho\in L^m(\mathbb{R}^N)\cap\mathcal{P}_R(\mathbb{R}^N)$ for some $R>0$. Then, $E[\rho_R]\leq E[\rho]$ with $\rho_R$ denoting a global minimizer of $E$ in $L^m(\mathbb{R}^N)\cap\mathcal{P}_R(\mathbb{R}^N)$. Since the diameter of the support of $\rho_R$ is bounded by $D$, we have $\rho_R\in L^m(\mathbb{R}^N)\cap\mathcal{P}_D(\mathbb{R}^N)$. Therefore, it holds that $E[\rho_D]\leq E[\rho]$ for all $\rho\in L^m(\mathbb{R}^N)\cap\mathcal{P}(\mathbb{R}^N)$ with compact support.

Now, consider some function $\rho\in L^m(\mathbb{R}^N)\cap\mathcal{P}(\mathbb{R}^N)$ without further restrictions. If $n$ is chosen large enough, it holds that $\rho(B_n(0))>0$. Define
\begin{equation*}
\rho_n\coloneqq\frac{1}{\int_{B_n(0)}\rho(x)\,dx}\chi_{B_n(0)}\rho
\end{equation*}
for $n$ large enough and observe that $E[\rho_n]\rightarrow E[\rho]$ as $n\rightarrow\infty$ by Lebesgue's monotone convergence theorem. Since $\rho_n$ is compactly supported, we have $E[\rho_D]\leq E[\rho_n]$ for all $n$. Therefore, it holds that $E[\rho_D]\leq E[\rho]$. Due to $E[\rho_D]<0$ by assumption, we have shown that $\rho_D$ is a global minimizer of $E$ in $L^m(\mathbb{R}^N)\cap\mathcal{P}(\mathbb{R}^N)$.
\end{proof}
By our previous approach, we can also prove the existence of global minimizers for $m\geq2$, which is obtained in \cite{bedrossian} via the concentration-compactness principle of Lions \cite{lions}, if we assume the interaction potential $G$ to be bounded.

Notice that for $m=2$, no function $\rho\in L^m(\mathbb{R}^N)\cap\mathcal{P}(\mathbb{R}^N)$ with $E[\rho]=0$ exists due to Lemma \ref{compsuppsatisfycond} and since no non-compact stationary solution of \eqref{eq1b} exists for $m=2$ (cf.\ Remark \ref{mequals2nononcompact}). So, for $m=2$ we also obtain as in \cite{bedrossian} the existence of a global minimizer for every $0<\varepsilon<1$. Furthermore, there cannot exist a global minimizer of $E$ in $L^2(\mathbb{R}^N)\cap\mathcal{P}(\mathbb{R}^N)$ for $\varepsilon\geq1$ because of the non-existence of a stationary solution of \eqref{eq1b} as shown in \cite[Lemma 3.4 and Theorem 3.5]{burgerdifrancescofranek}.

Keep in mind, it is also impossible for $1<m<2$ that a global minimizer exists for coefficients $\varepsilon>0$  not satisfying the condition in Lemma \ref{globalminimizerformsmaller2}.

For $m>2$, we assume by contradiction that there exists no upper bound for the diameter of the support of global minimizers $\rho_R$ of $E$ in $L^m(\mathbb{R}^N)\cap\mathcal{P}_R(\mathbb{R}^N)$ for all $R>0$. Then, we can use equation \eqref{equationforglobalminimizer} and the argumentation in the proof of Theorem \ref{globmincomp} to show that a global minimizer $\rho_S$ of $E$ in $L^m(\mathbb{R}^N)\cap\mathcal{P}_S(\mathbb{R}^N)$ satisfies
\begin{equation*}
2E[\rho_S]-\int_{\mathbb{R}^N}\varepsilon\Big(\frac{2}{m}-1\Big)\rho_S^m(y)\,dy\leq\rho_S(x)(\varepsilon C^{m-2}|x|^{-N(m-2)}-C^\prime)<0
\end{equation*}
if $S$ is chosen large enough such that there exists a point $x\in\,\mathrm{supp}\,\rho_S$ with $|x|>1$ large enough.

We have to show that there exists a constant $K<0$ such that
\begin{equation*}
2E[\rho_R]-\int_{\mathbb{R}^N}\varepsilon\Big(\frac{2}{m}-1\Big)\rho_R^m(y)\,dy\leq K<0
\end{equation*}
for all $R\geq S$. We can use this constant $K$ as in the proof of Lemma \ref{condforcompsupp} to obtain a uniform upper bound for the diameter of the support of $\rho_R$.

Choosing a point $x\in\,\mathrm{supp}\,\rho_S$ such that $\varepsilon C^{m-2}|x|^{-N(m-2)}-C^\prime<0$, we are done if it holds that $\rho_R(x)\geq c>0$ for all $R\geq S$ since we can choose $K=c(\varepsilon C^{m-2}|x|^{-N(m-2)}-C^\prime)$. Let us assume the contrary, i.e. $\rho_R(x)\rightarrow0$ as $R\rightarrow\infty$, and note that $\rho_R$ is radially symmetric and monotonically decreasing. If $\int_{\mathbb{R}^N}\rho_\infty(x)\,dx=1$, then we have $E[\rho_\infty]=E[\rho_S]$ and $\rho_\infty\in L^m(\mathbb{R}^N)\cap\mathcal{P}(\mathbb{R}^N)$ which means that there exists a global minimizer $\rho_R$ of $E$ in $L^m(\mathbb{R}^N)\cap\mathcal{P}_R(\mathbb{R}^N)$ with $\mathrm{supp}\,\rho_R\subset\mathrm{supp}\,\rho_S$ for all $R\geq S$. We exclude $\rho_\infty\equiv0$ since $E[\rho_R]\leq E[\rho_S]<0$. Furthermore, for every $\delta>0$ we find a constant $R^\prime>0$ such that $\inf_{\rho\in L^m\cap\mathcal{P}}E[\rho]>E[\rho_R]-\delta$ for all $R\geq R^\prime$. Therefore, assuming that $\int_{\mathbb{R}^N}\rho_\infty(x)\,dx=M\in(0,1)$ we obtain a contradiction by inequality \eqref{massminrelation} (which holds for $m>2$) since $E[\rho_R]\rightarrow E[\rho_\infty]\geq\inf_{\rho\in L^m\cap\mathcal{P}^M}E[\rho]$.
\begin{lem}
\label{noexistenceofglobminformsmaller2}
Let $G$ satisfy \ref{(G1)}-\ref{(G4)}. Let $1<m\leq2$ and $\varepsilon>\frac{m}{2}\|G\|_{L^{\frac{1}{m-1}}}$, then there exists no global minimizer of $E$ in $L^m(\mathbb{R}^N)\cap\mathcal{P}(\mathbb{R}^N)$.
\end{lem}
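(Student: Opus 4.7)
The plan is to deduce nonexistence from two facts: under the hypothesis $E[\rho]>0$ for every admissible $\rho$, while $\inf_{\rho\in L^m\cap\mathcal{P}}E[\rho]=0$. Together these force the infimum to be unattained and rule out any global minimizer.

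The heart of the proof is the sharp interaction bound
\begin{equation*}
\int_{\mathbb{R}^N}(G\ast\rho)(x)\rho(x)\,dx\leq\|G\|_{L^{1/(m-1)}}\|\rho\|_{L^m}^{m}
\end{equation*}
for all $\rho\in L^m(\mathbb{R}^N)\cap\mathcal{P}(\mathbb{R}^N)$, which I would obtain by chaining H\"older's inequality $\int(G\ast\rho)\rho\leq\|G\ast\rho\|_{L^{m'}}\|\rho\|_{L^m}$, Young's convolution inequality $\|G\ast\rho\|_{L^{m'}}\leq\|G\|_{L^p}\|\rho\|_{L^q}$ (with $\tfrac{1}{p}+\tfrac{1}{q}=1+\tfrac{1}{m'}$), and $L^1$-$L^m$ interpolation $\|\rho\|_{L^q}\leq\|\rho\|_{L^1}^{\theta}\|\rho\|_{L^m}^{1-\theta}$ (valid for $1\leq q\leq m$). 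Using $\|\rho\|_{L^1}=1$ the right-hand side collapses to $\|G\|_{L^p}\|\rho\|_{L^m}^{2-\theta}$; choosing $\theta=2-m$, which lies in $[0,1]$ precisely because $1<m\leq 2$, matches the target exponent $m$, and the interpolation and Young relations then force $p=\tfrac{1}{m-1}$. Note that $\|G\|_{L^{1/(m-1)}}<\infty$ by interpolating \ref{(G2)} between $L^1$ and $L^{\infty}$.

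With this bound in hand, the hypothesis $\varepsilon>\tfrac{m}{2}\|G\|_{L^{1/(m-1)}}$ yields
\begin{equation*}
E[\rho]\geq\frac{1}{m}\Big(\varepsilon-\frac{m}{2}\|G\|_{L^{1/(m-1)}}\Big)\|\rho\|_{L^m}^{m}>0
\end{equation*}
for every nonzero $\rho\in L^m(\mathbb{R}^N)\cap\mathcal{P}(\mathbb{R}^N)$, so in particular $\inf E\geq 0$. For the reverse inequality I fix any such $\rho_0$ and dilate via $\rho_\lambda(x)\coloneqq\lambda^{-N}\rho_0(x/\lambda)$, which remains in $L^m\cap\mathcal{P}$. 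A direct computation gives $\|\rho_\lambda\|_{L^m}^{m}=\lambda^{-N(m-1)}\|\rho_0\|_{L^m}^{m}\to 0$ as $\lambda\to\infty$, and the change of variables $x=\lambda x'$, $y=\lambda y'$ together with $\|G\|_{L^\infty}<\infty$ and $g(r)\to 0$ (assumptions \ref{(G2)}, \ref{(G4)}) yield via dominated convergence $\iint G(x-y)\rho_\lambda(x)\rho_\lambda(y)\,dx\,dy\to 0$. Hence $E[\rho_\lambda]\to 0$, so $\inf E=0$, and since $E$ is strictly positive on its domain this infimum is not attained.

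The main obstacle is the interaction bound: the precise choice $\theta=2-m$ is what makes the right-hand side homogeneous of degree $m$ in $\|\rho\|_{L^m}$ and produces exactly the $L^{1/(m-1)}$-norm of $G$. At $m=2$ this recovers $\|G\|_{L^1}=1$ and the known sharp threshold $\varepsilon=1$; at $m\searrow 1$ the exponent $1/(m-1)$ diverges, which is precisely why the boundedness assumption \ref{(G2)} on $G$ is required for the argument to function throughout the full range $1<m\leq 2$.
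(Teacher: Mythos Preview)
Your proof is correct and follows essentially the same route as the paper: both arguments chain H\"older, Young's convolution inequality, and $L^1$--$L^m$ interpolation to obtain $\int(G\ast\rho)\rho\leq\|G\|_{L^{1/(m-1)}}\|\rho\|_{L^m}^m$, from which $E[\rho]>0$ follows under the hypothesis on $\varepsilon$. The only difference is that the paper simply invokes the earlier remark that a global minimizer cannot have positive energy, whereas you supply the dilation argument $\rho_\lambda(x)=\lambda^{-N}\rho_0(x/\lambda)$ explicitly to show $\inf E=0$; this is a harmless elaboration rather than a different strategy.
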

\begin{proof}
By H\"older's inequality and Young's inequality for convolutions, we obtain for $1\leq r\leq2$
\begin{equation*}
\begin{aligned}
\|(G\ast\rho)\rho\|_{L^1} & \leq\|G\ast\rho\|_{L^{\frac{r}{r-1}}}\|\rho\|_{L^r}\leq\|G\|_{L^{\frac{r}{2r-2}}}\|\rho\|_{L^r}^2\\
 & \leq\|G\|_{L^{\frac{r}{2r-2}}}\|\rho\|_{L^{\frac{3r-2}{r}}}^{\frac{3r-2}{r}}\|\rho\|_{L^1}^{2-\frac{3r-2}{r}}.
 \end{aligned}
\end{equation*}
For $1<m\leq2$, we can choose $r\geq1$ such that $m=\frac{3r-2}{r}$ to conclude that
\begin{equation*}
E[\rho]\geq\frac{\varepsilon}{m}\|\rho\|_{L^m}^m-\frac{1}{2}\|G\|_{L^{\frac{1}{m-1}}}\|\rho\|_{L^m}^m\geq\Big(\frac{\varepsilon}{m}-\frac{1}{2}\|G\|_{L^{\frac{1}{m-1}}}\Big)\|\rho\|_{L^m}^m.
\end{equation*}
Obviously, there cannot exist a global minimizer with positive energy such that we exclude the existence of a global minimizer of $E$ in $L^m(\mathbb{R}^N)\cap\mathcal{P}(\mathbb{R}^N)$ for $\varepsilon>\frac{m}{2}\|G\|_{L^{\frac{1}{m-1}}}$.
\end{proof}
\begin{rmk}
\label{noexistenceofcompstatsolformsmaller2}
Analogously as in Lemma \ref{noexistenceofglobminformsmaller2}, we can state a result for the non-existence of compact stationary solutions of \eqref{eq1b} in case of $1<m\leq2$. Considering Lemma \ref{condforcompsupp}, we conclude that there cannot exist a compact stationary solution $\rho\in L^m(\mathbb{R}^N)\cap\mathcal{P}(\mathbb{R}^N)$ of \eqref{eq1b} for $1<m\leq2$ if $\varepsilon>\|G\|_{L^{\frac{1}{m-1}}}$.
\end{rmk}

However, these estimates do not imply that a global minimizer of $E$ in $L^m(\mathbb{R}^N)\cap\mathcal{P}(\mathbb{R}^N)$ exists for all coefficients $\varepsilon\leq\frac{m}{2}\|G\|_{L^{\frac{1}{m-1}}}$ or that a compact stationary solution $\rho\in L^m(\mathbb{R}^N)\cap\mathcal{P}(\mathbb{R}^N)$ of \eqref{eq1b} exists for all coefficients $\varepsilon\leq\|G\|_{L^{\frac{1}{m-1}}}$.

The threshold for the existence of a global minimizer of $E$ in $L^m(\mathbb{R}^N)\cap\mathcal{P}(\mathbb{R}^N)$ can be specified for $1<m\leq2$ by
\begin{equation}
\varepsilon_0\coloneqq\sup_{\rho\in L^m\cap\mathcal{P}}\frac{m}{2}\frac{\int_{\mathbb{R}^N}(G\ast\rho)(x)\rho(x)\,dx}{\|\rho\|_{L^m}^m}.
\end{equation}
\begin{lem}
Let $G$ be bounded and satisfy \ref{(G1)}-\ref{(G4)}. Let $1<m<2$, then there exists a compactly supported function $\rho\in L^m(\mathbb{R}^N)\cap\mathcal{P}(\mathbb{R}^N)$ such that $\varepsilon_0$ is attained.
\end{lem}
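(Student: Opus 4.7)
My plan is to construct a compactly supported maximizer by adapting the restricted-ball strategy of \cite{canizocarrillopatacchini}, combined with a two-sided a priori control on $\|\rho\|_{L^m}$ coming from the interpolation inequality already used in Lemma~\ref{noexistenceofglobminformsmaller2}.

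The first step is to show that any sequence $(\rho_n)$ with $J[\rho_n]\coloneqq\tfrac{m}{2}\|\rho_n\|_{L^m}^{-m}\int(G\ast\rho_n)\rho_n\,dx\to\varepsilon_0>0$ has $\|\rho_n\|_{L^m}$ bounded both above and below. Young's and H\"older's inequalities give
\[
\int_{\mathbb{R}^N}(G\ast\rho)(x)\rho(x)\,dx\leq\|G\ast\rho\|_{L^{m/(m-1)}}\|\rho\|_{L^m}\leq\|G\|_{L^{m/(2(m-1))}}\|\rho\|_{L^m}^{2},
\]
so that $J[\rho]\leq\tfrac{m}{2}\|G\|_{L^{m/(2(m-1))}}\|\rho\|_{L^m}^{2-m}$; since $2-m>0$, this forces $\|\rho_n\|_{L^m}$ to stay bounded away from zero. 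Conversely, the trivial estimate $\int(G\ast\rho)\rho\,dx\leq G(0)$ yields $\|\rho_n\|_{L^m}^m\leq mG(0)/(2J[\rho_n])$, hence a uniform upper bound.

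Up to Riesz symmetric decreasing rearrangement, I may assume maximizing sequences are radially symmetric and decreasing. For each $R>0$ I set $\varepsilon_0^R\coloneqq\sup_{\rho\in L^m\cap\mathcal{P}_R}J[\rho]$. The same weak $L^m$-compactness argument as in Lemma~\ref{globminrestricteddomain}, combined with strong compactness of $G\ast\rho_n$ in $L^{m/(m-1)}(B_R)$ (via $\nabla G\in L^1$ and Rellich-Kondrachov), makes $\int(G\ast\rho)\rho\,dx$ weakly continuous; together with the weak lower semicontinuity of $\|\cdot\|_{L^m}^m$ this shows that $\varepsilon_0^R$ is attained by a radially symmetric decreasing $\rho_R\in\mathcal{P}_R$, and truncation with Lebesgue monotone convergence yields $\varepsilon_0^R\nearrow\varepsilon_0$. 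A first-variation computation as in the previous lemma gives
\[
\varepsilon_0^R\rho_R^{m-1}(x)=(G\ast\rho_R)(x)-C_R\quad\text{on}\;\mathrm{supp}\,\rho_R,
\]
with $C_R=(1-m/2)\int(G\ast\rho_R)\rho_R\,dx>0$, and using $J[\rho_R]=\varepsilon_0^R$ shows that $E[\rho_R]=0$ when the coefficient $\varepsilon=\varepsilon_0^R$ is inserted in \eqref{eq2b}.

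The first step applied to $(\rho_R)_{R\geq S}$ yields a uniform lower bound $\|\rho_R\|_{L^m}^m\geq c>0$, so
\[
2E[\rho_R]-\int_{\mathbb{R}^N}\varepsilon_0^R\Big(\tfrac{2}{m}-1\Big)\rho_R^m\,dy=-\varepsilon_0^R\Big(\tfrac{2}{m}-1\Big)\|\rho_R\|_{L^m}^m\leq K<0
\]
for a constant $K$ independent of $R$. Lemma~\ref{condforcompsupp}, applied to $\rho_R$ regarded as a stationary solution of \eqref{eq1b} with coefficient $\varepsilon_0^R$, then provides a uniform upper bound on the diameter of $\mathrm{supp}\,\rho_R$. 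A weak $L^m$-limit $\rho^*$ of the $\rho_R$'s is supported in this fixed ball, has unit mass, and attains $J[\rho^*]\geq\varepsilon_0$ by the same weak continuity arguments used in the restricted problem, hence it is the desired compactly supported maximizer. The main obstacle is the uniform lower bound $\|\rho_R\|_{L^m}\geq c$: without it, the constant $K$ could degenerate and the supports could escape to infinity as $R\to\infty$; the interpolation inequality of the first step is precisely what rules out this ``spreading out'' scenario and makes the restricted-ball strategy go through.
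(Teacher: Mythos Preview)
Your proposal is correct and follows essentially the same route as the paper: restrict to balls, obtain a maximizer $\rho_{0,R}$ there, use the interpolation inequality $\int(G\ast\rho)\rho\leq\|G\|_{L^{m/(2m-2)}}\|\rho\|_{L^m}^{2}$ to get a uniform lower bound $\|\rho_{0,R}\|_{L^m}\geq\delta>0$, deduce that $2E[\rho_{0,R}]-\varepsilon_{0,R}(\tfrac{2}{m}-1)\|\rho_{0,R}\|_{L^m}^{m}\leq K<0$ uniformly in $R$, and then feed this into the mass-concentration argument of Lemma~\ref{condforcompsupp} to get a uniform diameter bound $D$. The only notable differences are cosmetic: the paper does not bother with the explicit upper bound on $\|\rho_{0,R}\|_{L^m}$ (it just cites boundedness of $G$), it does not write out the Euler--Lagrange equation for the ratio functional but uses $E[\rho_{0,R}]=0$ directly, and at the end it concludes more economically that $\varepsilon_0=\varepsilon_{0,D}$ (since every $\rho_{0,R}$ already lies in $\mathcal{P}_D$) rather than passing to a weak limit. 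One small caveat: strictly speaking $\rho_R$ is a constrained critical point in $\mathcal{P}_R$ rather than a stationary solution of \eqref{eq1b} in the sense of Lemma~\ref{condforcompsupp}, so what you (and the paper) really invoke is the \emph{proof} of that lemma, which only needs the pointwise identity $\varepsilon_0^R\rho_R^{m-1}-(G\ast\rho_R)=-C_R$ on $\mathrm{supp}\,\rho_R$ that your first-variation computation supplies.
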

\begin{proof}
Let us denote by $(\rho_n)_{n\in\mathbb{N}}$ a maximizing sequence in $L^m(\mathbb{R}^N)\cap\mathcal{P}_R(\mathbb{R}^N)$ for $\frac{m}{2}\|\rho\|_{L^m}^{-m}\int_{\mathbb{R}^N}(G\ast\rho)(x)\rho(x)\,dx$. Due to $G$ being bounded and $(\rho_n)_{n\in\mathbb{N}}$ being a maximizing sequence, it follows that $(\rho_n)_{n\in\mathbb{N}}$ is bounded in $L^m(\mathbb{R}^N)$. Therefore, as in the proof of Lemma \ref{globminrestricteddomain}, we can conclude that
\begin{equation}
\varepsilon_{0,R}\coloneqq\sup_{\rho\in L^m\cap\mathcal{P}_R}\frac{m}{2}\frac{\int_{\mathbb{R}^N}(G\ast\rho)(x)\rho(x)\,dx}{\|\rho\|_{L^m}^m}
\end{equation}
is attained for some $\rho_{0,R}\in L^m(\mathbb{R}^N)\cap\mathcal{P}_R(\mathbb{R}^N)$.

Note that $(\rho_{0,R})_{R\geq R^\prime}$ is a maximizing sequence for the problem in $L^m(\mathbb{R}^N)\cap\mathcal{P}(\mathbb{R}^N)$, i.e. $\varepsilon_{0,R}\rightarrow\varepsilon_0$ as $R\rightarrow\infty$ and in particular $\varepsilon_{0,R}\geq\varepsilon_{0,R^\prime}$ for all $R\geq R^\prime$ where $R^\prime$ is chosen large enough. Due to $\|(G\ast\rho)\rho\|_{L^1}\leq\|G\|_{L^{\frac{m}{2m-2}}}\|\rho\|_{L^m}^2$, the $L^m$-norm of this maximizing sequence is bounded from below by a constant greater than zero, i.e. $\|\rho_{0,R}\|_{L^m}\geq\delta>0$ for all $R\geq R^\prime$. We use the lower bound of the $L^m$-norm to estimate that
\begin{equation}
2E[\rho_{0,R}]-\int_{\mathbb{R}^N}\varepsilon_{0,R}\Big(\frac{2}{m}-1\Big)\rho_{0,R}^m(y)\,dy\leq-\varepsilon_{0,R^\prime}
\Big(\frac{2}{m}-1\Big)\delta^m
\end{equation}
for all $R\geq R^\prime$. Choosing $K=-\varepsilon_{0,R^\prime}\big(\frac{2}{m}-1\big)\delta^m$ in Lemma \ref{condforcompsupp}, we obtain $\mathrm{supp}\,\rho_{0,R}\subset B_D(0)$ for all $R>0$ where $D$ depends on $R^\prime$ but is independent from $R\geq R^\prime$. Hence, $\rho_{0,R}\in L^m(\mathbb{R}^N)\cap\mathcal{P}_D(\mathbb{R}^N)$ for all $R>0$ and, similar as in the proof of Lemma \ref{globalminimizerformsmaller2}, it follows $\varepsilon_0=\varepsilon_{0,D}$.
\end{proof}
To sum up, we have now proved the statement of Theorem \ref{existenceofglobalmin} concerning the existence of a global minimizer of the energy $E$  in $L^m(\mathbb{R}^N)\cap\mathcal{P}(\mathbb{R}^N)$ depending on the exponent $m$ of the degenerate diffusion and the coefficient $\varepsilon$.

\section{Uniqueness of stationary solutions}
\label{stationarysolutions}
In the following, we prove for $m=2$ in arbitrary dimensions the uniqueness of stationary solutions of \eqref{eq1b} up to a translation. In \cite{burgerdifrancescofranek}, it is shown for $N=1$ that a stationary solution of \eqref{eq1b} is unique up to a translation. As in \cite{burgerfetecauhuang} for $N=1$, we also derive for $m\neq2$ in higher dimensions that there are coefficients $\varepsilon$ such that compact stationary solutions of \eqref{eq1b} exist.

First, we sketch the strategy in \cite[Section 4]{burgerdifrancescofranek} which is used to conclude for $N=1$ that up to a translation a unique stationary solution in $L^2(\mathbb{R})\cap\mathcal{P}(\mathbb{R})$ of \eqref{eq1b} exists for $m=2$ and $0<\varepsilon<1$.
\begin{itemize}
\item Stationary solutions of \eqref{eq1b} have connected support for $N=1$.
\item Stationary solutions of \eqref{eq1b} are compactly supported for $N=1$.
\item For any stationary solution of \eqref{eq1b} there exists a symmetric stationary solution of \eqref{eq1b} with the same energy if $N=1$.
\item A stationary solution $\rho\in L^2(\mathbb{R})\cap\mathcal{P}(\mathbb{R})$ of \eqref{eq1b} with $|\mathrm{supp}\,\rho|\geq|\mathrm{supp}\,\rho_{\mathrm{min}}|$, where $\rho_{\mathrm{min}}$ is a global minimizer of $E$ in $L^2(\mathbb{R})\cap\mathcal{P}(\mathbb{R})$, is a minimizer.
\item For every $L>0$, a unique function $\rho$ with $\mathrm{supp}\,\rho=[-L,L]$ exists which is symmetric, monotonically decreasing on $\{x>0\}$, compact and satisfies 
\begin{equation*}
\varepsilon\rho(x)=\int_{-L}^{L}G(x-y)\rho(y)\,dy-(G\ast\rho)(L)
\end{equation*}
for some $\varepsilon>0$. This follows considering the eigenvalue problem
\begin{equation*}
\varepsilon\rho^{\prime}(x)=\int_0^L\big(G(x-y)-G(x+y)\big)\rho^{\prime}(y)\,dy
\end{equation*}
via the strong version of the Krein-Rutman theorem (see Theorem \ref{strongversionkreinrutman}).
\item The eigenvalue $\varepsilon$ in the above eigenvalue problem is strictly monotonically increasing on $\{x>0\}$ with the size $L$ of the support of $\rho$. It holds that $\varepsilon\searrow 0$ as $L\searrow 0$ and $\varepsilon\nearrow1$ as $L\nearrow +\infty$.
\item Using the above results and that the global minimizer is symmetric and monotonically decreasing, one can show for $\varepsilon<1$ that up to a translation a unique stationary solution $\rho\in L^2(\mathbb{R})\cap\mathcal{P}(\mathbb{R})$ of \eqref{eq1b} exists for $N=1$ which coincides with the global minimizer of $E$ in $L^2(\mathbb{R})\cap\mathcal{P}(\mathbb{R})$.
\end{itemize}

Due to Theorem \ref{symmetryofstationarysolutions}, we already know that a stationary solution of \eqref{eq1b} is radially symmetric and monotonically decreasing in higher dimensions. In particular, it has a connected support. As observed in Remark \ref{mequals2nononcompact}, for $m=2$ there cannot exist a stationary solution of \eqref{eq1b} with non-compact support. By Theorem \ref{globmincomp} and Theorem \ref{existenceofglobalmin}, we also know that the global minimizer of $E$ in $L^m(\mathbb{R}^N)\cap\mathcal{P}(\mathbb{R}^N)$ is radially symmetric, monotonically decreasing and has compact support.

Following the last three points used for the proof for $N=1$, in higher dimensions we show for $m=2$ and $0<\varepsilon<1$ that there is a unique stationary solution of \eqref{eq1b} up to a translation which coincides with the global minimizer of the energy $E$ in $L^m(\mathbb{R}^N)\cap\mathcal{P}(\mathbb{R}^N)$. Moreover, as in \cite[Section 3.1]{burgerfetecauhuang} for $N=1$, we deduce in higher dimensions in case of $m\neq 2$ for any $R>0$ the existence of a coefficient $\varepsilon>0$ such that there is a radially symmetric stationary solution of \eqref{eq1b} with support $\overline{B_R(0)}$.

For radially symmetric functions $\rho$ and $G$, i.e.\ $\rho(x)=\tilde{\rho}(|x|)$ and $G(x)=g(|x|)$, we can write
\begin{equation*}
\begin{aligned}
(G\ast\rho)(x) & =\int_{\mathbb{R}^N}G(x-y)\rho(y)\,dy=\int_0^{+\infty}\Big(\int_{\partial B_s(0)}G(x-y)\,d\sigma(y)\Big)\tilde{\rho}(s)\,ds\\
& =\int_0^{+\infty}\Big(\int_{\partial B_s(0)}G(|x|e_1-y)\,d\sigma(y)\Big)\tilde{\rho}(s)\,ds.
\end{aligned}
\end{equation*}
Using that $\int_{\partial B_s(0)}\nabla G(x-y)\,d\sigma(y)$ is parallel to $x$ and rotationally invariant, it is shown in \cite[Lemma 3.2]{bertozzicarrillolaurent} that it holds
\begin{equation}
\nabla(G\ast\rho)(x)=\int_0^{+\infty}\Big(\int_{\partial B_s(0)}\nabla G(|x|e_1-y)\cdot e_1\,d\sigma(y)\Big)\tilde{\rho}(s)\,ds\,\frac{x}{|x|}.
\end{equation}
Similarly, we also derive
\begin{equation}
\nabla(G\ast\rho)(x)=\int_0^{+\infty}\Big(\int_{\partial B_s(0)}G(|x|e_1-y)\frac{y\cdot e_1}{|y|}\,d\sigma(y)\Big)\tilde{\rho}^{\prime}(s)\,ds\,\frac{x}{|x|}.
\end{equation}
Using the radial symmetry of $G$, it is useful to observe that we have
\begin{equation}
\label{rotationjustwithg}
\int_{\partial B_s(0)}G(re_1-y)\,d\sigma(y)=\frac{s^{N-1}}{r^{N-1}}\int_{\partial B_r(0)}G(se_1-y)\,d\sigma(y)
\end{equation}
and
\begin{equation}
\label{rotationwithgandcos}
\int_{\partial B_s(0)}G(re_1-y)\frac{y\cdot e_1}{|y|}\,d\sigma(y)=\frac{s^{N-1}}{r^{N-1}}\int_{\partial B_r(0)}G(se_1-y)\frac{y\cdot e_1}{|y|}\,d\sigma(y).
\end{equation}
In the radial case, the energy functional reads
\begin{equation*}
\begin{aligned}
E[\tilde{\rho}]= & \int_0^{+\infty}\frac{\varepsilon}{m}\tilde{\rho}^m(r)\omega_N r^{N-1}\,dr\\
& -\frac{1}{2}\int_0^{+\infty}\int_0^{+\infty}\omega_N r^{N-1}\tilde{\rho}(r)\Big(\int_{\partial B_s(0)}G(re_1-y)\,d\sigma(y)\Big)\tilde{\rho}(s)\,dsdr
\end{aligned}
\end{equation*}
where $\omega_N$ denotes the surface area of the unit sphere in $\mathbb{R}^N$.

Let us now state the strong version of the Krein-Rutman theorem as in \cite[Theorem 4.10]{burgerdifrancescofranek}.
\begin{thm}[Krein-Rutman theorem, strong version]
\label{strongversionkreinrutman}
Let $X$ be a Banach space. Let $K\subset X$ be a solid cone, i.e.\ $\lambda K\subset K$ for all $\lambda\geq 0$ and $K$ has a non-empty interior $K_0$. Let $T$ be a compact linear operator which is strongly positive with respect to $K$, i.e.\ $T[u]\in K_0$ if $u\in K\setminus\{0\}$. Then:
\begin{itemize}
\item[(i)] The spectral radius $r(T)$ is strictly positive and $r(T)$ is a simple eigenvalue with an eigenvector $v\in K_0$. There is no other eigenvalue with a corresponding eigenvector $v\in K$.
\item[(ii)] $|\lambda|<r(T)$ for all other eigenvalues $\lambda\neq r(T)$.
\end{itemize}
\end{thm}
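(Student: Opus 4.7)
The plan is to follow the classical Krein--Rutman strategy in three steps: produce a positive eigenvector at the spectral radius via a fixed-point argument, exploit strong positivity to establish simplicity and uniqueness of the positive eigenvector, and finally rule out peripheral eigenvalues. Throughout, write $u \leq v$ to mean $v - u \in K$.

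For (i), I would first show $r(T) > 0$. Fix $e \in K_0$; by strong positivity $Te \in K_0$, so there exists $\alpha > 0$ with $Te \geq \alpha e$. Iterating, $T^n e \geq \alpha^n e$, and since $e$ is an interior point of $K$ this yields $\|T^n\| \geq c \alpha^n$, hence $r(T) \geq \alpha$. To obtain an eigenvector at $r(T)$, I would fix $e \in K_0$ and, for each small $\delta > 0$, apply Schauder's fixed-point theorem (leveraging compactness of $T$) to the map $u \mapsto (Tu + \delta e)/\|Tu + \delta e\|$ on a suitable convex subset of $K$; this produces $u_\delta$ with $T u_\delta + \delta e = \lambda_\delta u_\delta$. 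Extracting a convergent subsequence as $\delta \downarrow 0$ yields $Tv = \lambda_0 v$ with $v \in K \setminus \{0\}$, and strong positivity promotes $v$ into $K_0$. The Collatz--Wielandt characterization $\lambda_0 = \sup\{\lambda \geq 0 : \exists u \in K\setminus\{0\},\ Tu \geq \lambda u\}$ combined with the Gelfand spectral-radius formula identifies $\lambda_0 = r(T)$.

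For the simplicity of $r(T)$ and the uniqueness statement in (i), suppose $Tw = \mu w$ with $w \in K \setminus \{0\}$. Strong positivity gives $w = \mu^{-1} Tw \in K_0$. Set $\tau := \sup\{t \geq 0 : v - tw \in K\}$, which is finite and strictly positive since $v, w \in K_0$. If $v - \tau w \neq 0$ then $v - \tau w \in K\setminus\{0\}$, and applying $r(T)^{-1} T$ places $v - (\mu/r(T))\tau w$ into $K_0$; when $\mu = r(T)$ this contradicts maximality of $\tau$, and when $\mu \neq r(T)$ one iterates the argument to reach a contradiction. Hence $w$ is a scalar multiple of $v$ and $\mu = r(T)$, which gives both simplicity and the absence of any other eigenvalue with eigenvector in $K$.

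The main obstacle is (ii). Suppose $Tw = \lambda w$ with $|\lambda| = r(T)$ and $\lambda \neq r(T)$ in the complexification of $X$. Writing $w = w_1 + i w_2$ with $w_j \in X$, one would construct an ``absolute value'' $|w| \in K$ adapted to the cone, then combine the triangle inequality with strong positivity to derive $T|w| \geq r(T)|w|$. The Collatz--Wielandt characterization forces equality, whence by the simplicity already established $|w| = c v$ for some $c > 0$. Tracing the equality case of the triangle inequality and exploiting strong positivity once more forces $\lambda$ to be real and positive, contradicting $\lambda \neq r(T)$. Making the absolute-value construction rigorous in an abstract cone (rather than a Banach lattice) is the delicate point; in the intended application to a symmetric integral operator acting on continuous functions, the standard lattice structure is available and the argument simplifies considerably.
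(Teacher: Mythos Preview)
The paper does not prove this theorem at all: it is stated as a known result and attributed to \cite[Theorem 4.10]{burgerdifrancescofranek}, so there is no ``paper's own proof'' to compare against. Your outline is a reasonable sketch of the classical Krein--Rutman argument, but it goes well beyond what the paper itself provides, which is simply a citation.
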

Following the approach for $N=1$ in \cite[Section 4]{burgerdifrancescofranek}, finding in higher dimensions a compact, radially symmetric and monotonically decreasing function $\rho\in C^2(\mathrm{supp}\,\rho)$ vanishing on the boundary and satisfying 
\begin{equation*}
\varepsilon\rho(x)=(G\ast\rho)(x)-(G\ast\rho)(\tilde{x})
\end{equation*}
with $\tilde{x}\in\partial(\mathrm{supp}\,\rho)$, means finding $\tilde{\rho}\in C^2(\mathrm{supp}\,\tilde{\rho})$ with $\mathrm{supp}\,\tilde{\rho}=[0,R]$ such that
\begin{equation*}
\begin{aligned}
& \tilde{\rho}(R)=0,\quad -\tilde{\rho}^{\prime}(r)=u(r),\quad u\geq 0, \\
& \varepsilon u=\int_0^R H(r,s)u(s)\,ds\quad\mathrm{with}\quad H(r,s)\coloneqq\int_{\partial B_s(0)}G(re_1-y)\frac{y\cdot e_1}{|y|}\,d\sigma(y).
\end{aligned}
\end{equation*}
This is equivalent since we have
\begin{equation*}
\begin{aligned}
\varepsilon\rho(x) & =\varepsilon\int_{|x|}^R -\tilde{\rho}^{\prime}(r)\,dr\\
& =-\int_{|x|}^R\int_0^R\Big(\int_{\partial B_s(0)}G(re_1-y)\frac{y\cdot e_1}{|y|}\,d\sigma(y)\Big)\tilde{\rho}^{\prime}(s)\,dsdr\\
& =-\int_{|x|}^R\int_0^R\Big(\int_{\partial B_s(0)}\partial_r G(re_1-y)\,d\sigma(y)\Big)\tilde{\rho}(s)\,dsdr\\
& =-\int_0^R\Big(\int_{\partial B_s(0)}G(Re_1-y)\,d\sigma(y)\Big)\tilde{\rho}(s)\,ds\\
& \quad +\int_0^R\Big(\int_{\partial B_s(0)}G(|x|e_1-y)\,d\sigma(y)\Big)\tilde{\rho}(s)\,ds\\
& =(G\ast\rho)(x)-(G\ast\rho)(\tilde{x})
\end{aligned}
\end{equation*}
with $|\tilde{x}|=R$.

In order to simplify notation, let us define
\begin{align}
Y_R & \coloneqq\{\tilde{\rho}\in C([0,R])\,\big|\,\tilde{\rho}(R)=0\},\\
\mathcal{H}_R[u](r) & \coloneqq \int_0^R\Big(\int_{\partial B_s(0)}G(re_1-y)\frac{y\cdot e_1}{|y|}\,d\sigma(y)\Big)u(s)\,ds,\\
\mathcal{G}_R[\tilde{\rho}] & \coloneqq \int_0^R\Big(\int_{\partial B_s(0)}G(re_1-y)-G(Re_1-y)\,d\sigma(y)\Big)\tilde{\rho}(s)\,ds.
\end{align}

To prove the following result about uniqueness of a function with the above properties via the strong version of the Krein-Rutman theorem as in \cite[Proposition 4.11]{burgerdifrancescofranek} for $N=1$, it is enough to show that if $u\geq 0$, then $\mathcal{H}_R[u](r)\geq 0$ holds for all $r\in[0,R]$ and $\mathcal{H}_R[u](0)=0$. Moreover, for $u\in\{f\in C^1([0,R])\,\big|\,f(0)=0\}$ satisfying $u\geq0$ and $u\not\equiv0$ we must have $(\mathcal{H}[u])^{\prime}(0)>0$.
\begin{thm}
\label{uniquenessviakreinrut}
Let $G$ satisfy \ref{(G1)}-\ref{(G4)}. For every $R>0$ there exists a unique, radially symmetric function $\rho\in C^2(\overline{B_R(0)})\cap\mathcal{P}(\mathbb{R}^N)\cap C(\mathbb{R}^N)$ with $\mathrm{supp}\,\rho=\overline{B_R(0)}$ and with radial representative $\tilde{\rho}$ such that $\tilde{\rho}^{\prime}(r)\leq 0$ for $r\geq0$, $\tilde{\rho}^{\prime\prime}(0)<0$ and such that $\rho$ solves 
\begin{equation*}
\varepsilon\rho(x)=(G\ast\rho)(x)-(G\ast\rho)(R)
\end{equation*}
in $\overline{B_R(0)}$ for some coefficient $\varepsilon=\varepsilon(R)>0$.

Moreover, $\varepsilon(R)$ is the largest eigenvalue of the compact operator $\mathcal{G}_R$ in the Banach space $Y_R$ and any other eigenfunction of $\mathcal{G}_R$ in $Y_R$ with
\begin{equation*}
\int_0^{+\infty}\tilde{\rho}(r)\omega_N r^{N-1}\,dr=1
\end{equation*}
has the corresponding eigenvalue $\varepsilon^{\prime}$ satisfying $|\varepsilon^{\prime}|<\varepsilon(R)$.
\end{thm}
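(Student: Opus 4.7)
The plan is to set up an equivalent eigenvalue problem for $\mathcal{H}_R$ on the Banach space $X = \{f \in C^1([0,R]) : f(0) = 0\}$ and apply the strong Krein--Rutman theorem (Theorem~\ref{strongversionkreinrutman}) with cone $K = \{u \in X : u \geq 0\}$. A short check shows that the interior $K_0$ consists of those $u$ with $u > 0$ on $(0,R]$ and $u'(0) > 0$, so the three kernel properties flagged in the paragraph before the statement are exactly what is needed to conclude $\mathcal{H}_R[u] \in K_0$ for every $u \in K \setminus \{0\}$. Once the principal eigenpair $(\varepsilon(R), u)$ is obtained, the stationary profile is recovered by setting $\tilde{\rho}(r) = \int_r^R u(s)\,ds$ and normalizing; the chain of equalities already displayed in the excerpt converts $\varepsilon u = \mathcal{H}_R[u]$ directly into $\varepsilon\rho(x) = (G\ast\rho)(x) - (G\ast\rho)(R)$ on $\overline{B_R(0)}$.

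For the three kernel properties, I would substitute $y = s\sigma$ with $\sigma \in S^{N-1}$ to write $H(r,s) = s^{N-1}\int_{S^{N-1}} g(|re_1 - s\sigma|)\sigma_1\,d\sigma$. At $r=0$ the integrand is $g(s)\sigma_1$, odd under $\sigma \mapsto -\sigma$, so $H(0,s) = 0$ and $\mathcal{H}_R[u](0) = 0$. For $r > 0$, pairing antipodal half-spheres gives
\begin{equation*}
H(r,s) = s^{N-1}\int_{\{\sigma_1 > 0\}} \bigl[g(|re_1 - s\sigma|) - g(|re_1 + s\sigma|)\bigr]\sigma_1\,d\sigma,
\end{equation*}
and since $|re_1 - s\sigma|^2 - |re_1 + s\sigma|^2 = -4rs\sigma_1 < 0$ on the domain of integration, the strict monotonicity \ref{(G3)} forces $H(r,s) > 0$ on $(0,R]\times(0,R]$. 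Differentiating under the integral at $r=0$ yields
\begin{equation*}
\partial_r H(r,s)\Big|_{r=0} = -g'(s)\,s^{N-1}\int_{S^{N-1}} \sigma_1^{\,2}\,d\sigma,
\end{equation*}
which is strictly positive by \ref{(G3)}, hence $(\mathcal{H}_R[u])'(0) > 0$ for every $u \in K \setminus \{0\}$.

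Compactness of $\mathcal{H}_R : X \to X$ follows from \ref{(G2)}: since $H$ is $C^2$ in $r$, $\mathcal{H}_R$ maps $C([0,R])$ boundedly into $C^2([0,R])$ and the embedding into $C^1([0,R])$ is compact. Strong Krein--Rutman then furnishes a simple largest eigenvalue $\varepsilon(R) > 0$ with eigenfunction $u \in K_0$, all other eigenvalues being strictly smaller in modulus. Setting $\tilde{\rho}(r) = \int_r^R u(s)\,ds$, I read off $\tilde{\rho}(R) = 0$, $\tilde{\rho}'(r) = -u(r) \leq 0$ with $\tilde{\rho}'(0) = 0$ and $\tilde{\rho}''(0) = -u'(0) < 0$, so the radial extension is $C^2$ across the origin; the mass constraint fixes the scalar multiple. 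For uniqueness, any admissible $\rho$ yields by radial differentiation of the stationary equation a function $u = -\tilde{\rho}'$ with $\varepsilon u = \mathcal{H}_R[u]$, $u \in K$, $u(0) = 0$, $u'(0) > 0$; strict positivity of $u$ on $(0,R]$ follows from the radial gradient formula for $G \ast \rho$ combined with $H > 0$, which forces $G \ast \rho$ to be strictly radially decreasing and rules out a flat piece of $\tilde{\rho}$ via the stationary equation. Hence $u \in K_0$, and Krein--Rutman simplicity pins down $u$, and thus $\rho$, up to the prescribed normalization. The spectral statement for $\mathcal{G}_R$ transfers through the bijection $u \leftrightarrow \tilde{\rho}$ of eigenspaces at the same eigenvalue, delivered by the same chain of equalities.

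The main obstacle is the Hopf-type strict positivity $(\mathcal{H}_R[u])'(0) > 0$: it dictates working in $C^1$ rather than $C^0$, so that the constraint $u(0) = 0$ carves out a cone with non-empty interior, and it leans essentially on \ref{(G3)}, since the strict (rather than merely weak) monotonicity of $g$ is precisely what makes $\partial_r H(0,s)$ strictly positive.
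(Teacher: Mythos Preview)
Your proposal is correct and follows essentially the same route as the paper: both apply the strong Krein--Rutman theorem to $\mathcal{H}_R$ on the $C^1$ space $\{f\in C^1([0,R]):f(0)=0\}$, establish $H(r,s)>0$ for $r,s>0$ via the same antipodal pairing under the reflection $\sigma_1\mapsto-\sigma_1$, and compute $\partial_r H(0,s)=-g'(s)s^{N-1}\int_{S^{N-1}}\sigma_1^2\,d\sigma>0$ directly. You supply more of the surrounding scaffolding (compactness of $\mathcal{H}_R$, the explicit description of $K_0$, the uniqueness direction via bootstrapping $u>0$ on $(0,R]$, and the transfer to $\mathcal{G}_R$) that the paper leaves to the cited one-dimensional reference, but the substance is the same.
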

\begin{proof}
Let us split up $H(r,s)$ into
\begin{equation*}
\begin{aligned}
H(r,s) & =\int_{\partial B_s(0)\cap\{y_1>0\}}G(re_1-y)\frac{y\cdot e_1}{|y|}\,d\sigma(y)\\
& \quad +\int_{\partial B_s(0)\cap\{y_1<0\}}G(re_1-y)\frac{y\cdot e_1}{|y|}\,d\sigma(y).
\end{aligned}
\end{equation*}
For any $y_1>0$ there exists $\tilde{y}_1<0$ such that we have $-y_1=\tilde{y}_1$ as well as $\frac{y\cdot e_1}{|y|}=-\frac{\tilde{y}\cdot e_1}{|\tilde{y}|}$ with $y=(y_1,\ldots,y_N)$ and $\tilde{y}=(\tilde{y}_1,\ldots,y_N)$ on $\partial B_s(0)$.

Let $r\geq 0$, then it holds that $|r-y_1|\leq|r-\tilde{y}_1|$, i.e.\ we have $|re_1-y|\leq|re_1-\tilde{y}|$ and $g(|re_1-y|)\geq g(|re_1-\tilde{y}|)$ and equality holds if and only if $r=0$ since $g$ is strictly monotonically decreasing.

Therefore, we have $G(re_1-y)\frac{y\cdot e_1}{|y|}\geq-G(re_1-\tilde{y})\frac{\tilde{y}\cdot e_1}{|\tilde{y}|}$ such that $H(r,s)\geq 0$ for all $r\in[0,R]$ and an equality holds if and only if $r=0$. Due to $u$ being non-negative, we directly see that $\mathcal{H}_R[u](r)\geq 0$ holds for all $r\in[0,R]$ and $\mathcal{H}_R[u](0)=0$.

Finally, calculating the derivative of $\mathcal{H}_R[u]$ at the point $r=0$ for $u\not\equiv 0$
\begin{equation*}
\frac{d}{dr}\mathcal{H}_R[u](r)\Big|_{r=0}=-\int_0^{+\infty}\int_{\partial B(0,s)}g^{\prime}(|y|)\frac{y_1^2}{|y|^2}\,d\sigma(y)\,u(s)\,ds>0
\end{equation*}
yields the desired inequality.
\end{proof}
Using the properties of the radially symmetric case as in Theorem \ref{uniquenessviakreinrut}, following the proofs for $N=1$ in \cite[Section 3.1]{burgerfetecauhuang} we also obtain in case of $m\neq2$ for all $R>0$ the existence of a compact, radially symmetric stationary solution of \eqref{eq1b} with $\mathrm{supp}\,\rho=\overline{B_R(0)}$ in higher dimensions. Due to the additional non-linearity arising for $m\neq2$, it seems to be difficult to obtain the uniqueness as in Theorem \ref{uniquenessviakreinrut}. 
\begin{thm}
\label{existenceviakreinrutmanforgeneralm}
Let $G$ satisfy \ref{(G1)}-\ref{(G4)}. For every $R>0$ there exists a radially symmetric function $\rho\in\mathcal{P}(\mathbb{R}^N)\cap C(\mathbb{R}^N)$ with $\mathrm{supp}\,\rho=\overline{B_R(0)}$ and with radial representative $\tilde{\rho}$ such that $\tilde{\rho}^{\prime}(r)\leq0$ for $r\geq0$, $\tilde{\rho}(R)=0$ and $\rho$ solves
\begin{equation*}
\varepsilon\rho^{m-1}(x)=(G\ast\rho)(x)-(G\ast\rho)(R)
\end{equation*}
in $\overline{B_R(0)}$ for some coefficient $\varepsilon>0$.
\end{thm}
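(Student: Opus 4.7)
The plan is to reformulate the equation as a fixed-point problem and invoke Schauder's theorem. For $\rho\in\mathcal{P}_R(\mathbb{R}^N)$ radial and non-increasing in $|x|$, set $W_\rho(x)\coloneqq(G\ast\rho)(x)-(G\ast\rho)(R)$ on $\overline{B_R(0)}$ and define
\begin{equation*}
T[\rho](x)\coloneqq\frac{W_\rho(x)^{1/(m-1)}}{\int_{B_R(0)}W_\rho(z)^{1/(m-1)}\,dz},\qquad\varepsilon(\rho)\coloneqq\Bigl(\int_{B_R(0)}W_\rho(z)^{1/(m-1)}\,dz\Bigr)^{m-1}.
\end{equation*}
Any fixed point $\rho=T[\rho]$ satisfies $\varepsilon(\rho)\rho^{m-1}(x)=(G\ast\rho)(x)-(G\ast\rho)(R)$ on $\overline{B_R(0)}$, which is precisely the identity required, with a coefficient $\varepsilon(\rho)>0$ determined automatically by the mass constraint.

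I would work on the convex, closed subset
\begin{equation*}
\mathcal{K}_R\coloneqq\Bigl\{\rho\in C(\overline{B_R(0)})\,\Big|\,\rho\geq0,\ \int_{B_R(0)}\rho\,dx=1,\ \rho\ \text{radial and non-increasing in}\ |x|\Bigr\}
\end{equation*}
of $C(\overline{B_R(0)})$. By \ref{(G3)}, $G\ast\rho$ is radial and strictly decreasing in $|x|$, so $W_\rho\geq0$, vanishes only at $|x|=R$, and $T[\rho]\in\mathcal{K}_R$ with $T[\rho](R)=0$. The bounds $\|G\ast\rho\|_\infty\leq\|G\|_\infty$ and $\|\nabla(G\ast\rho)\|_\infty\leq\|\nabla G\|_\infty$ from \ref{(G2)} make $\{W_\rho\}_{\rho\in\mathcal{K}_R}$ uniformly bounded and uniformly Lipschitz, and uniform continuity of $t\mapsto t^{1/(m-1)}$ on $[0,2\|G\|_\infty]$ transfers these properties to $\{W_\rho^{1/(m-1)}\}$. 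Arzel\`{a}--Ascoli then yields precompactness of $T(\mathcal{K}_R)$ in $C(\overline{B_R(0)})$, provided the denominator stays bounded away from zero.

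The main step is a uniform positive lower bound on $\int_{B_R(0)}W_\rho(z)^{1/(m-1)}\,dz$ on $\mathcal{K}_R$. Any $\rho\in\mathcal{K}_R$ obeys the mass-comparison inequality $\int_{B_{R/4}(0)}\rho\,dx\geq 4^{-N}$, since otherwise the average of $\rho$ on $B_R(0)\setminus B_{R/4}(0)$ would strictly exceed its average on $B_{R/4}(0)$, contradicting radial monotonicity. Combined with $g(|y|)-g(|Re_1-y|)\geq g(R/4)-g(3R/4)>0$ for $|y|\leq R/4$ (from \ref{(G3)}), this forces $W_\rho(0)\geq 4^{-N}\bigl(g(R/4)-g(3R/4)\bigr)>0$ uniformly on $\mathcal{K}_R$, and the uniform Lipschitz control on $W_\rho$ transfers this pointwise lower bound to the integral. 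Continuity of $T$ on $\mathcal{K}_R$ then follows from continuity of $\rho\mapsto G\ast\rho$ in $C(\overline{B_R(0)})$, uniform continuity of $t\mapsto t^{1/(m-1)}$, and the uniform lower bound on the denominator.

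Schauder's fixed point theorem then produces $\rho^\ast\in\mathcal{K}_R$ with $T[\rho^\ast]=\rho^\ast$. By construction $\rho^\ast$ is radial and non-increasing with $\tilde\rho^{\ast\prime}(r)\leq0$, vanishes at $|x|=R$, has $\mathrm{supp}\,\rho^\ast=\overline{B_R(0)}$ by the strict monotonicity of $g$, extends to a function in $C(\mathbb{R}^N)\cap\mathcal{P}(\mathbb{R}^N)$ by zero, and solves the stated equation with $\varepsilon=\varepsilon(\rho^\ast)>0$. The hardest point is precisely the uniform lower bound on $\varepsilon(\rho)$: without it, a sequence in $\mathcal{K}_R$ could in principle concentrate its mass near $\partial B_R(0)$ and drive $\varepsilon(\rho)$ to zero, breaking the normalisation and hence the fixed-point formulation; the radial monotonicity built into $\mathcal{K}_R$, made quantitative by the mass comparison above, is precisely what rules this out.
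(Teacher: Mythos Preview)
Your overall strategy---recasting the stationary equation as a fixed point of the mass-normalised map $T[\rho]=W_\rho^{1/(m-1)}\big/\int W_\rho^{1/(m-1)}$ on the convex set $\mathcal{K}_R$ of radial non-increasing probability densities, and then applying Schauder---is sound and is in the same spirit as what the paper invokes from \cite[Section~3.1]{burgerfetecauhuang}. The compactness and continuity arguments go through essentially as you sketch, once the denominator is bounded away from zero uniformly on $\mathcal{K}_R$.

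The genuine gap is precisely in your ``main step'': the claimed bound
\[
W_\rho(0)\;=\;\int_{B_R(0)}\bigl[g(|y|)-g(|Re_1-y|)\bigr]\rho(y)\,dy\;\geq\;4^{-N}\bigl(g(R/4)-g(3R/4)\bigr)
\]
does not follow from the inequality $g(|y|)-g(|Re_1-y|)\geq g(R/4)-g(3R/4)$ on $B_{R/4}(0)$ alone, because the integrand is \emph{not} non-negative on the annulus $B_R(0)\setminus B_{R/4}(0)$. For instance, at $y=\tfrac{3R}{4}e_1$ one has $|y|=3R/4>|Re_1-y|=R/4$, so $g(|y|)-g(|Re_1-y|)<0$; and even after averaging over spheres (which is legitimate since $\rho$ is radial), the spherical mean $g(s)-\Phi(R,s)$ with $\Phi(R,s)=\tfrac{1}{|\partial B_s|}\int_{\partial B_s}g(|Re_1-z|)\,d\sigma(z)$ can be strictly negative for $s$ near $R$ (take $N=1$ and $g(r)=e^{-r^2}$). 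Thus the annular contribution can genuinely subtract from your inner-ball estimate, and the displayed inequality is false in general.

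The conclusion you want is nevertheless true. One clean route is compactness: view $\mathcal{K}_R$ as a set of probability measures on $\overline{B_R(0)}$; its weak-$\ast$ closure consists of measures of the form $\mu=c\,\delta_0+f\,dx$ with $c\geq0$ and $f$ radial non-increasing. The functional $\mu\mapsto(G\ast\mu)(0)-(G\ast\mu)(R)$ is weak-$\ast$ continuous (the test function $G(\cdot)-G(Re_1-\cdot)$ is bounded and continuous), and it is \emph{strictly} positive on this closure: the $\delta_0$ part contributes $c\,(g(0)-g(R))>0$ if $c>0$, and for $f\not\equiv0$ one checks via the layer-cake decomposition $f=\int_0^\infty\chi_{B_{r(t)}}\,dt$ and a reflection argument that each $(G\ast\chi_{B_a})(\cdot)$ is strictly radially decreasing, hence $(G\ast f)(0)>(G\ast f)(R)$. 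Compactness then gives a uniform lower bound $W_\rho(0)\geq c(G,R)>0$ on $\mathcal{K}_R$, and your Lipschitz-propagation argument finishes the lower bound on the integral. With this repair, the Schauder argument is complete.
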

Analogously to the one-dimensional case in \cite[Proposition 4.12]{burgerdifrancescofranek}, we can prove the following result in higher dimensions.
\begin{lem}
\label{eigenvalueandsupport}
In case of $m=2$ the simple eigenvalue $\varepsilon(R)$ from Theorem \ref{uniquenessviakreinrut} is uniquely determined as a function of $R$ and $\varepsilon(R)$ is strictly monotonically increasing with $R$, $\varepsilon(R)\searrow 0$ as $R\searrow 0$ and $\varepsilon(R)\nearrow1$ as $R\nearrow +\infty$.
\end{lem}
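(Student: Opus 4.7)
The plan is to deduce all three properties (uniqueness, strict monotonicity, and the two limits) from a Rayleigh-quotient characterization of $\varepsilon(R)$ made available by a self-adjointness property of $\mathcal{H}_R$.

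First, I would notice that identity \eqref{rotationwithgandcos} rewrites as
\[
r^{N-1}H(r,s)\;=\;s^{N-1}H(s,r),
\]
and a direct Fubini computation then shows that $\mathcal{H}_R$ is self-adjoint on the weighted Hilbert space $L^2([0,R],\,r^{N-1}\,dr)$. Combined with the Krein-Rutman simplicity established in the proof of Theorem \ref{uniquenessviakreinrut}, this self-adjointness yields
\[
\varepsilon(R)\;=\;\sup_{u\not\equiv 0}\frac{\int_0^R u(r)\,\mathcal{H}_R[u](r)\,r^{N-1}\,dr}{\int_0^R u(r)^2\,r^{N-1}\,dr},
\]
from which the uniqueness of $\varepsilon(R)$ as a function of $R$ is immediate.

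Next, I would recast the quotient in $H^1$/Fourier form. Setting $u=-\tilde\psi'$ for a radial $\psi\in H^1_0(B_R(0))$ with radial profile $\tilde\psi$, the equivalence of the two representations of $\nabla(G\ast\psi)$ recorded in the paper translates into $\mathcal{H}_R[u](r)=-\partial_r(G\ast\psi)(re_1)$, so that by polar decomposition
\[
\omega_N\int_0^R u\,\mathcal{H}_R[u]\,r^{N-1}\,dr=\int_{\mathbb{R}^N}\nabla\psi\cdot\nabla(G\ast\psi)\,dx,\qquad \omega_N\int_0^R u^2\,r^{N-1}\,dr=\int_{\mathbb{R}^N}|\nabla\psi|^2\,dx,
\]
and Plancherel converts the ratio into
\[
\varepsilon(R)\;=\;\sup_{\psi}\,\frac{\int_{\mathbb{R}^N}|\xi|^2\hat{G}(\xi)|\hat\psi(\xi)|^2\,d\xi}{\int_{\mathbb{R}^N}|\xi|^2|\hat\psi(\xi)|^2\,d\xi},
\]
with the supremum taken over radial $\psi\in H^1_0(B_R(0))\setminus\{0\}$. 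For monotonicity, extending any $\psi$ admissible on $B_{R_1}$ by zero is admissible on $B_{R_2}$ for $R_2>R_1$, giving $\varepsilon(R_2)\ge\varepsilon(R_1)$. If equality held, then the principal eigenfunction $\rho_1$ on $\overline{B_{R_1}(0)}$, extended by zero, would solve the $R_2$-eigenvalue equation; on $B_{R_2}\setminus\overline{B_{R_1}(0)}$ this forces $(G\ast\rho_1)(x)=(G\ast\rho_1)(R_2)$, contradicting the strict radial monotonicity of $G\ast\rho_1$ coming from \ref{(G3)}.

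For the limit $R\to 0$, the elementary bound $H(r,s)\le g(0)\omega_N s^{N-1}$ together with Cauchy-Schwarz in $L^2_w$ yields $\varepsilon(R)\le g(0)\omega_N R^N/N\to 0$. For $R\to\infty$, I plug the dilated bump $\psi_R(x)=\phi(x/R)$ (with a fixed nonnegative radial $\phi\in C_c^\infty(B_1(0))$) into the Fourier form of the Rayleigh quotient. Since $\hat\psi_R(\xi)=R^N\hat\phi(R\xi)$, the change of variables $\eta=R\xi$ turns the quotient into
\[
\frac{\int_{\mathbb{R}^N}|\eta|^2\hat{G}(\eta/R)|\hat\phi(\eta)|^2\,d\eta}{\int_{\mathbb{R}^N}|\eta|^2|\hat\phi(\eta)|^2\,d\eta},
\]
which converges to $\hat{G}(0)=\|G\|_{L^1}=1$ by dominated convergence. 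Combined with the general bound $\varepsilon(R)\le 1$ that follows from $|\hat G|\le\|G\|_{L^1}=1$, this proves $\varepsilon(R)\nearrow 1$. The main technical obstacle I anticipate is the clean derivation of the identities in the second paragraph linking the weighted-$L^2$ inner product of $\mathcal{H}_R$ to the Dirichlet form $\int\nabla\psi\cdot\nabla(G\ast\psi)\,dx$; once that identification is in place, the strict-monotonicity contradiction and both limit computations reduce to standard Fourier/scaling arguments.
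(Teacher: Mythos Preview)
Your argument is correct, but it takes a genuinely different route from the paper. Both approaches rest on the same structural identity $r^{N-1}H(r,s)=s^{N-1}H(s,r)$, but the paper uses it directly: it multiplies the eigenvalue equation $\varepsilon(R)u_R=\mathcal{H}_R[u_R]$ by $r^{N-1}u_{R+\delta}$, integrates, swaps the order via the symmetry, and reads off
\[
\big(\varepsilon(R+\delta)-\varepsilon(R)\big)\int_0^R r^{N-1}u_R u_{R+\delta}\,dr=\int_0^R s^{N-1}u_R(s)\Big(\int_R^{R+\delta}H(s,r)u_{R+\delta}(r)\,dr\Big)ds>0,
\]
which gives strict monotonicity in one line; the limits are not spelled out but are deferred to the one-dimensional reference. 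Your route packages the same symmetry as self-adjointness on $L^2([0,R],r^{N-1}dr)$, passes to the Rayleigh quotient, and then reformulates everything in Fourier space via $u=-\tilde\psi'$. This costs more setup (the identification of $\langle u,\mathcal{H}_R[u]\rangle_w$ with $\int\nabla\psi\cdot\nabla(G\ast\psi)$, which you rightly flag), but it buys a unified treatment of all three claims: domain monotonicity is trivial for the variational problem, and both limits fall out of elementary Fourier/scaling arguments rather than ad hoc estimates. Two minor points worth tightening: for strictness, it is cleaner to note that the extended $u_1$ would be a principal eigenfunction of $\mathcal{H}_{R_2}$ and hence strictly positive on $(0,R_2]$ by Krein--Rutman, which it visibly is not; and the bijection between $L^2_w([0,R])$ and radial $H^1_0(B_R)$ via $\tilde\psi(r)=\int_r^R u$ deserves a word near $r=0$ in dimensions $N\ge2$ (restricting the sup to continuous $u$ with $u(0)=0$, which is dense, sidesteps the issue).
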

\begin{proof}
We adapt the approach in \cite[Proposition 4.12]{burgerdifrancescofranek} for $N=1$ to our setting. Let $u_R$ denote the unique eigenfunction from Theorem \ref{uniquenessviakreinrut} with corresponding eigenvalue $\varepsilon(R)$. We conclude, as was the idea in the proof of \cite[Proposition 4.12]{burgerdifrancescofranek}, by considering
\begin{equation}
\varepsilon(R)u_R(r)=\mathcal{H}_R[u_R](r),
\end{equation}
multiplying by $r^{N-1}u_{R+\delta}(r)$ and taking into consideration that $H(r,s)=\frac{s^{N-1}}{r^{N-1}}H(s,r)$ due to \eqref{rotationwithgandcos}. Then, we obtain for every $\delta>0$
\begin{equation*}
\begin{aligned}
& \varepsilon(R)\int_0^R r^{N-1}u_R(r)u_{R+\delta}(r)\,dr=\int_0^R r^{N-1}\mathcal{H}_R[u_R](r)u_{R+\delta}(r)\,dr\\
& \quad=\int_0^R\int_0^R H(r,s)u_R(s)\,ds\,u_{R+\delta}(r)\,dr\\
& \quad=\int_0^R s^{N-1}u_R(s)\Big(\int_0^{R+\delta}H(s,r)u_{R+\delta}(r)\,dr\Big)ds\\
& \quad\quad-\int_0^R s^{N-1}u_R(s)\Big(\int_R^{R+\delta}H(s,r)u_{R+\delta}(r)\,dr\Big)ds\\
& \quad=\varepsilon(R+\delta)\int_0^R s^{N-1}u_R(s)u_{R+\delta}(s)\,ds\\
& \quad\quad-\int_0^R s^{N-1}u_R(s)\Big(\int_R^{R+\delta}H(s,r)u_{R+\delta}(r)\,dr\Big)ds.
\end{aligned}
\end{equation*}
Since we know by the proof of Theorem \ref{uniquenessviakreinrut} that it holds $u_R(r)>0$ for $r\in(0,R]$, we have shown that $\varepsilon(R+\delta)>\varepsilon(R)$.
\end{proof}
Now, we prove for $m=2$ in arbitrary dimensions that there is a unique stationary solution of \eqref{eq1b} up to a translation.
\begin{proof}[Proof of Theorem \ref{uniquenesstheoremformequals2}]
By Theorem \ref{existenceofglobalmin}, we know that a radially symmetric and monotonically decreasing global minimizer of $E$ in $L^2(\mathbb{R}^N)\cap\mathcal{P}(\mathbb{R}^N)$ exists for $\varepsilon<1$. Therefore, the minimizer is connected and has compact support as shown in Theorem \ref{globmincomp}.
Due to Corollary \ref{propcompandnoncompsol}, Theorem \ref{uniquenessviakreinrut} and Lemma \ref{eigenvalueandsupport}, we know that a unique stationary solution of \eqref{eq1b} with these properties exists such that we conclude that the global minimizer of the energy $E$ in $L^2(\mathbb{R}^N)\cap\mathcal{P}(\mathbb{R}^N)$ is unique.

Moreover, by Corollary \ref{propcompandnoncompsol} and Remark \ref{mequals2nononcompact}, for $m=2$ no stationary solution $\rho\in L^2(\mathbb{R}^N)\cap\mathcal{P}(\mathbb{R}^N)$ of \eqref{eq1b} with a non-compact support and due to Theorem \ref{symmetryofstationarysolutions} no stationary solution of \eqref{eq1b} not being radially symmetric and monotonically decreasing exists. Therefore, we have proved that a stationary solution $\rho\in L^m(\mathbb{R}^N)\cap\mathcal{P}(\mathbb{R}^N)$ of \eqref{eq1b} is unique.
\end{proof}

\section{Discussion of stationary solutions with positive energy}
\label{discussion}
In this section, we discuss if a stationary solution of \eqref{eq1b} with positive energy may exist. We assume that the interaction potential $G$ satisfies \ref{(G1)}-\ref{(G4)}.

We know by Theorem \ref{uniquenesstheoremformequals2} that for $m=2$ there exists for every $0<\varepsilon<1$ and $R>0$ a triple $(\varepsilon,R,\rho)$ where each component uniquely determines the other ones such that
\begin{equation*}
\rho\nabla(\varepsilon\rho^{m-1}-G\ast\rho)=0
\end{equation*}
is satisfied. Here, $\rho$ is a radially symmetric and monotonically decreasing function $\rho\in C^2(\overline{B_R(0)})\cap\mathcal{P}(\mathbb{R}^N)\cap C(\mathbb{R}^N)$ with $\mathrm{supp}\,\rho=\overline{B_R(0)}$.

If one would like to derive this result for $m\neq2$, even when being able to prove uniqueness in Theorem \ref{existenceviakreinrutmanforgeneralm}, which is indicated by numerical calculations in \cite[Section 4.2]{burgerfetecauhuang}, one cannot simply follow the approach in the proof of Lemma \ref{eigenvalueandsupport} since for general $m>1$ one obtains
\begin{equation*}
\begin{aligned}
& \varepsilon(R+\delta)(m-1)\int_0^R r^{N-1}\tilde{\rho}_{R+\delta}^{m-2}(r)u_R(r)u_{R+\delta}(r)\,dr\\
& -\varepsilon(R)(m-1)\int_0^R r^{N-1}\tilde{\rho}_R^{m-2}(r)u_R(r)u_{R+\delta}(r)\,dr\\
& \quad=\int_0^R r^{N-1}u_R(r)\Big(\int_R^{R+\delta}H(r,s)u_{R+\delta}(s)\,ds\Big)dr.
\end{aligned}
\end{equation*}

Out of this equation, one could still conjecture that the coefficient $\varepsilon$ is strictly monotonically increasing with the size of the support as is also suggested by numerical results in \cite[Section 4.2]{burgerfetecauhuang}.

However, in case of $m>2$ we can at least show that the size of the support of a compact stationary solution of \eqref{eq1b} is bounded from below by the coefficient $\varepsilon$. Using the reverse H\"older inequality, we obtain
\begin{equation}
\label{estimateforrhoinnorm}
\varepsilon\|\rho\|_{L^{m-1}}^{m-1}=\varepsilon\|\rho^{m-1}\|_{L^1}\geq\varepsilon\|\rho\|_{L^1}^{m-1}|\mathrm{supp}\,\rho|^{2-m}=\varepsilon|\mathrm{supp}\,\rho|^{2-m}.
\end{equation}
Since we assume the stationary solution of \eqref{eq1b} to be compactly supported, it holds that $\varepsilon\rho^{m-1}(x)<G\ast\rho(x)$ in $\mathrm{supp}\,\rho$. Integrating over $\mathrm{supp}\,\rho$ and extending the integration domain on the right hand side to the whole space, using inequality \eqref{estimateforrhoinnorm} we obtain that
\begin{equation*}
\varepsilon|\mathrm{supp}\,\rho|^{2-m}<1.
\end{equation*}
So, we derived for $m>2$ a lower bound for the size of the support depending on the coefficient $\varepsilon$ which reads for a radially symmetric and monotonically decreasing stationary solution of \eqref{eq1b} with $\mathrm{supp}\,\rho=\overline{B_R(0)}$ as
\begin{equation*}
|B_R(0)|>\varepsilon^{\frac{1}{m-2}}.
\end{equation*}
Such a relation is also obtained for $N=1$ in \cite[Section 4.2]{burgerfetecauhuang} by assuming the stationary solution of \eqref{eq1b} to be approximately a characteristic function which is suitably scaled to have unit mass.

If $m>2$, then there exists for every $\varepsilon>0$ a radially symmetric and monotonically decreasing stationary solution $\rho\in L^m(\mathbb{R}^N)\cap\mathcal{P}(\mathbb{R}^N)$ of \eqref{eq1b} being compactly supported due to Theorem \ref{existenceofglobalmin}, the compactness of global minimizers of $E$ in $L^m(\mathbb{R}^N)\cap\mathcal{P}(\mathbb{R}^N)$ (Theorem \ref{globmincomp}) and Lemma \ref{resultsfromburgerdifrancescofranek}\ref{results(iv)}.

In contrast, if $1<m<2$, we only know that there exists for all coefficients $\varepsilon$ not greater than some $\varepsilon_0>0$ a radially symmetric and monotonically decreasing stationary solution $\rho\in L^m(\mathbb{R}^N)\cap\mathcal{P}(\mathbb{R}^N)$ of \eqref{eq1b} being compactly supported due to Theorem \ref{existenceofglobalmin}, Theorem \ref{globmincomp} and Lemma \ref{resultsfromburgerdifrancescofranek}\ref{results(iv)}. The constant $\varepsilon_0$ depends on the interaction potential $G$ and the exponent $m$ of the degenerate diffusion (cf.\ Remark \ref{noexistenceofcompstatsolformsmaller2}) and marks the threshold where we can find a function $\rho\in L^m(\mathbb{R}^N)\cap\mathcal{P}(\mathbb{R}^N)$ such that we have $E[\rho]\leq0$.

These observations complement the results in the theoretical part of \cite{burgerfetecauhuang}. It is shown in \cite[Theorem 3.7]{burgerfetecauhuang} for $N=1$ that for $m>2$ there exists for every $L>0$ a coefficient $\varepsilon>0$ such that there is a stationary solution of \eqref{eq1b} with $\mathrm{supp}\,\rho=[-L,L]$ which is symmetric and monotonically decreasing on $\{x>0\}$. We extended this result in Theorem \ref{existenceviakreinrutmanforgeneralm} to arbitrary dimensions. In the numerical part of \cite{burgerfetecauhuang}, it is suggested that there may exist a compactly supported stationary solution of \eqref{eq1b} for all coefficients $\varepsilon>0$ which is proved here as remarked above.

Moreover, for $1<m<2$ it is shown in \cite[Theorem 3.9]{burgerfetecauhuang} for $N=1$ that for every $L>0$ there exists a coefficient $\varepsilon>0$ such that there is a stationary solution of \eqref{eq1b} with $\mathrm{supp}\,\rho=[-L,L]$ which is symmetric and monotonically decreasing on $\{x>0\}$. Again, we extended this result in Theorem \ref{existenceviakreinrutmanforgeneralm} to arbitrary dimensions. In the numerical part of \cite{burgerfetecauhuang}, it is suggested that in this case a compactly supported stationary solution of \eqref{eq1b} exists for any coefficient $\varepsilon$ smaller than a constant $\varepsilon_1>0$ and that these stationary solutions are just local minimizers but could turn into global minimizers for coefficients $\varepsilon$ smaller than some $\varepsilon_0<\varepsilon_1$. As remarked above, this statement is proved here only in terms of stationary solutions of \eqref{eq1b} which are global minimizers of $E$ in $L^m(\mathbb{R}^N)\cap\mathcal{P}(\mathbb{R}^N)$, i.e.\ for $0<\varepsilon\leq\varepsilon_0$.

Obviously, and also pointed out in Lemma \ref{noexistenceofglobminformsmaller2} and Remark \ref{noexistenceofcompstatsolformsmaller2}, for $1<m<2$ the condition for being a compact stationary solution of \eqref{eq1b} is less strict than for being a global minimizer of $E$ in $L^m(\mathbb{R}^N)\cap\mathcal{P}(\mathbb{R}^N)$ because the latter forces the energy to be non-positive which does not need to hold for compactness of stationary solutions of \eqref{eq1b} in case of $1<m<2$ (cf.\ Remark \ref{stationarysolutionswithpositiveenergy}). 

By Corollary \ref{propcompandnoncompsol}, we know that we can write the energy of a compactly supported stationary solution $\rho\in L^m(\mathbb{R}^N)\cap\mathcal{P}(\mathbb{R}^N)$ of \eqref{eq1b} with $\mathrm{supp}\,\rho=\overline{B_R(0)}$ as
\begin{equation*}
E[\rho]=\int_{\mathbb{R}^N}\varepsilon\Big(\frac{1}{m}-\frac{1}{2}\Big)\rho^m(y)\,dy-\frac{1}{2}(G\ast\rho)(R).
\end{equation*}
So, depending on the interaction potential $G$ and the size of the support of the stationary solution of \eqref{eq1b} there could exist a compactly supported stationary solution of \eqref{eq1b} with positive energy for $1<m<2$.

Now, assume that there is a triple $(\varepsilon,R,\rho)$ where each component uniquely determines the other ones and which solves $\rho\nabla(\varepsilon\rho^{m-1}-G\ast\rho)=0$ with $\rho$ being a radially symmetric and monotonically decreasing function $\rho\in C^2(B_R(0))\cap\mathcal{P}(\mathbb{R}^N)\cap C(\mathbb{R}^N)$ and $\mathrm{supp}\,\rho=\overline{B_R(0)}$. In addition, assume that $\varepsilon$ is strictly increasing with the size of the support. Then, there has to exist a compact stationary solution $\rho\in L^m(\mathbb{R}^N)\cap\mathcal{P}(\mathbb{R}^N)$ of \eqref{eq1b} which is no global minimizer. To convince ourselves about that let $\varepsilon=\varepsilon_0$, i.e.\ there exists a $\rho_0\in L^m(\mathbb{R}^N)\cap\mathcal{P}(\mathbb{R}^N)$ such that $E[\rho_0]=0$ and there is no $\rho\in L^m(\mathbb{R}^N)\cap\mathcal{P}(\mathbb{R}^N)$ with $E[\rho]<0$. Then, by Theorem \ref{existenceofglobalmin} we know that $\rho_0$ is a global minimizer of $E$ which has compact support due to Theorem \ref{globmincomp}. Considering Theorem \ref{existenceviakreinrutmanforgeneralm}, there is a radially symmetric and monotonically decreasing stationary solution $\rho\in\mathcal{P}(\mathbb{R}^N)\cap C(\mathbb{R}^N)$ of \eqref{eq1b} with $\mathrm{supp}\,\rho\supset\,\mathrm{supp}\,\rho_0$. Since we assumed the coefficient $\varepsilon$ to increase strictly with $R$, we have $E[\rho]>0$ because of $\varepsilon>\varepsilon_0$.

To sum up, if we were able to prove uniqueness in Theorem \ref{existenceviakreinrutmanforgeneralm} for $1<m<2$ and if we were able to prove that the coefficient $\varepsilon$ is strictly increasing with the size of the support of the resulting unique function, then we would have shown that a unique compactly supported, radially symmetric and monotonically decreasing stationary solution of \eqref{eq1b} exists for coefficients $\varepsilon$ smaller than some value $\varepsilon_1$. This stationary solution is a global minimizer of $E$ in $L^m(\mathbb{R}^N)\cap\mathcal{P}(\mathbb{R}^N)$ for coefficients $\varepsilon\leq\varepsilon_0$ but loses this property for coefficients with a larger value. In particular, we would have shown that the threshold $\varepsilon_0$ for the existence of a global minimizer of $E$ in $L^m(\mathbb{R}^N)\cap\mathcal{P}(\mathbb{R}^N)$ is strictly smaller than the threshold $\varepsilon_1$ for the existence of a compact stationary solution of \eqref{eq1b}. 

The value $\varepsilon_1$ is presumably determined by the condition for a stationary solution $\rho\in L^m(\mathbb{R}^N)\cap\mathcal{P}(\mathbb{R}^N)$ of \eqref{eq1b} to be compact, i.e.\ by Remark \ref{differentcompcond} and Theorem \ref{symmetryofstationarysolutions} it is presumably the smallest coefficient such that there is no radially symmetric and monotonically decreasing $\rho\in L^m(\mathbb{R}^N)\cap\mathcal{P}(\mathbb{R}^N)$ satisfying
\begin{equation*}
\varepsilon\rho^{m-1}(x)-(G\ast\rho)(x)<0
\end{equation*}
in $\mathrm{supp}\,\rho$. In \cite[Section 4.2]{burgerfetecauhuang}, it is also shown formally for $1<m<2$ that the non-linear eigenvalue problem
\begin{equation*}
\varepsilon\rho^{m-1}(x)-(G\ast\rho)(x)=0
\end{equation*}
governs for $N=1$ the limiting profile for a stationary solution of \eqref{eq1b} with support $[-L,L]$ and $L\rightarrow\infty$.

So, for $1<m<2$ the threshold for the existence of a compact stationary solution $\rho\in L^m(\mathbb{R}^N)\cap\mathcal{P}(\mathbb{R}^N)$ of \eqref{eq1b} can be estimated by
\begin{equation}
\varepsilon_1<\frac{2}{m}\varepsilon_0=\sup_{\rho\in L^m\cap\mathcal{P}}\frac{\int_{\mathbb{R}^N}(G\ast\rho)(x)\rho(x)\,dx}{\|\rho\|_{L^m}^m}.
\end{equation}
We obtain a strict inequality since the function attaining the supremum is a global minimizer of the energy $E$ in $L^m(\mathbb{R}^N)\cap\mathcal{P}(\mathbb{R}^N)$ with coefficients $\varepsilon_0$ and has to be compactly supported.

Considering equation \eqref{equationforconnectednoncompactsolution} in Corollary \ref{propcompandnoncompsol}, we know that every non-compact stationary solution $\rho\in L^m(\mathbb{R}^N)\cap\mathcal{P}(\mathbb{R}^N)$ of \eqref{eq1b} satisfies $\varepsilon=\|\rho\|_{L^m}^{-m}\int_{\mathbb{R}^N}(G\ast\rho)(x)\rho(x)\,dx$. Therefore, we can estimate that under our assumptions a stationary solution $\rho\in L^m(\mathbb{R}^N)\cap\mathcal{P}(\mathbb{R}^N)$ of \eqref{eq1b} cannot exist for $1<m<2$ if $\varepsilon\geq\frac{2}{m}\varepsilon_0$.

\section*{Acknowledgements}
The author would like to thank Angela Stevens for numerous critical discussions and financial support, and Martin Burger for some useful comments and suggestions.

\bibliography{references}

\end{document}